\documentclass[11pt]{article}
\usepackage{amsfonts,amsmath,tikz}
\usepackage{epsfig}
\usepackage{latexsym}
\usepackage{color}
\usepackage{epsf,amssymb}
\usepackage{enumitem}
\usepackage{array}

\usepackage{amsthm}
\usepackage{yfonts}
\usepackage{hyperref}
\usepackage{ifthen}
\usepackage{booktabs}

\usepackage{cite}

\usepackage{expl3}

\usetikzlibrary{shapes,positioning}
\usetikzlibrary{calc}

\usepackage{float} 
\floatstyle{plaintop}

\newtheorem{theorem}{Theorem}
\newtheorem{lemma}[theorem]{Lemma}

\newtheorem{corollary}[theorem]{Corollary}
\newtheorem{proposition}[theorem]{Proposition}

\newcommand{\cind}{c_{\rm ind}}

\newcommand{\gammaenaL}{\gamma_1^{\mathrm L}}

\NewDocumentCommand{\Goldbergsnark}{m m m m m O{} O{} o}{
\tikzset{Style empty/.style={draw, circle, fill=white,  scale=0.4}}
\tikzset{Style full/.style={draw, circle, fill=black, scale=0.4}}

    \foreach \i in {1,...,#1} {
        \ifthenelse{\i=1}{            
            \coordinate (la{#5}\i) at ({(#3 + #2*3*(\i-1) - #2/2)},#2*3 + #4);
            \coordinate (lf{#5}\i) at ({(#3 + #2*3*(\i-1) - #2/2)},#2 + #4);
            \coordinate (lh{#5}\i) at ({(#3 + #2*3*\i -#2*2 - #2/2)},#2*0 + #4);

            \coordinate (lca{#5}\i) at ({(#3 + #2*3*(\i-1) - #2/2)},#2*3.75 + #4);
            \coordinate (lcf{#5}\i) at ({(#3 + #2*3*(\i-1) - #2/2)},#2-#2*10/4 + #4);
            \coordinate (lch{#5}\i) at ({(#3 + #2*3*\i -#2*2 - #2/2)},#2*0-#2*3/4 + #4);
        }{}
        \ifthenelse{\i=#1}{
            \coordinate (rb{#5}\i) at ({(#3 + #2*3*\i -#2 + #2/2)},#2*3 + #4);
            \coordinate (rg{#5}\i) at ({(#3 + #2*3*\i -#2 + #2/2)},#2 + #4);
            \coordinate (rh{#5}\i) at ({(#3 + #2*3*\i -#2*2 + #2/2)},#2*0 + #4);

            \coordinate (rcb{#5}\i) at ({(#3 + #2*3*\i -#2 + #2/2)},#2*3.75 + #4);
            \coordinate (rcg{#5}\i) at ({(#3 + #2*3*\i -#2 + #2/2)},#2-#2*10/4 + #4);
            \coordinate (rch{#5}\i) at ({(#3 + #2*3*\i -#2*2 + #2/2)},#2*0-#2*3/4 + #4);
        }{}
        
        \coordinate (a{#5}\i) at ({(#3 + #2*3*(\i-1))},#2*3 + #4);
        \coordinate (b{#5}\i) at ({(#3 + #2*3*\i -#2)},#2*3 + #4);
        \coordinate (c{#5}\i) at ({(#3 + #2*3*(\i-1))},#2*2 + #4);
        \coordinate (d{#5}\i) at ({(#3 + #2*3*\i -#2*2)},#2*2 + #4);
        \coordinate (e{#5}\i) at ({(#3 + #2*3*\i -#2)},#2*2 + #4);
        \coordinate (f{#5}\i) at ({(#3 + #2*3*(\i-1))},#2 + #4);
        \coordinate (g{#5}\i) at ({(#3 + #2*3*\i -#2)},#2 + #4);
        \coordinate (h{#5}\i) at ({(#3 + #2*3*\i -#2*2)},#2*0 + #4);
          
        \draw (a{#5}\i) -- (b{#5}\i);
        \draw (a{#5}\i) -- (e{#5}\i);
        \draw (b{#5}\i) -- (c{#5}\i);
        \draw (c{#5}\i) -- (d{#5}\i);
        \draw (c{#5}\i) -- (f{#5}\i);
        \draw (d{#5}\i) -- (e{#5}\i);
        \draw (d{#5}\i) -- (h{#5}\i);
        \draw (e{#5}\i) -- (g{#5}\i);
        \draw (f{#5}\i) -- (g{#5}\i);

        \pgfmathtruncatemacro{\pI}{\i-1}

        \ifthenelse{\i>1}
        {
            \draw (b{#5}\pI) -- (a{#5}\i);
            \draw (g{#5}\pI) -- (f{#5}\i);
            \draw (h{#5}\pI) -- (h{#5}\i);
        }{}

        \IfNoValueT{#8}
        {
            \ifthenelse{\i=1}
            {
                \draw (la{#5}\i) -- (a{#5}\i);
                \draw (lf{#5}\i) -- (f{#5}\i);
                \draw (lh{#5}\i) -- (h{#5}\i);
            }
    
            \ifthenelse{\i=#1}
            {
                \draw (rb{#5}\i) -- (b{#5}\i);
                \draw (rg{#5}\i) -- (g{#5}\i);
                \draw (rh{#5}\i) -- (h{#5}\i);
            }
        }
    }

    \IfNoValueF{#8}
    {
        \draw[rounded corners] (a{#5}1) -- (la{#5}1) -- (lca{#5}1) -- (rcb{#5}#1) -- (rb{#5}#1) -- (b{#5}#1);
        \draw[rounded corners] (h{#5}1) -- (lh{#5}1) -- (lch{#5}1) -- (rch{#5}#1) -- (rh{#5}#1) -- (h{#5}#1);
        \draw[rounded corners] (f{#5}1) -- (lf{#5}1) -- (lcf{#5}1) -- (rcg{#5}#1) -- (rg{#5}#1) -- (g{#5}#1);
     
    }
    
    \foreach \i in {1,...,#1} {
        \IfInSetTF{#6}{(\i-1)*8+1} 
        { \node[Style full] at (a{#5}\i) {}; }
        { \node[Style empty] at (a{#5}\i) {}; }
        
        \IfInSetTF{#6}{(\i-1)*8+2} 
        { \node[Style full] at (b{#5}\i) {}; }
        { \node[Style empty] at (b{#5}\i) {}; }
        
        \IfInSetTF{#6}{(\i-1)*8+3} 
        { \node[Style full] at (c{#5}\i) {}; }
        { \node[Style empty] at (c{#5}\i) {}; }
        
        \IfInSetTF{#6}{(\i-1)*8+4} 
        { \node[Style full] at (d{#5}\i) {}; }
        { \node[Style empty] at (d{#5}\i) {}; }
        
        \IfInSetTF{#6}{(\i-1)*8+5} 
        { \node[Style full] at (e{#5}\i) {}; }
        { \node[Style empty] at (e{#5}\i) {}; }
        
        \IfInSetTF{#6}{(\i-1)*8+6} 
        { \node[Style full] at (f{#5}\i) {}; }
        { \node[Style empty] at (f{#5}\i) {}; }
        
        \IfInSetTF{#6}{(\i-1)*8+7} 
        { \node[Style full] at (g{#5}\i) {}; }
        { \node[Style empty] at (g{#5}\i) {}; }
        
        \IfInSetTF{#6}{(\i-1)*8+8} 
        { \node[Style full] at (h{#5}\i) {}; }
        {\node[Style empty] at (h{#5}\i) {}; }

        \IfInSetTF{#7}{(\i-1)*8+1} { \squareAround{a{#5}\i}; \node[left, xshift=-2pt] at (a{#5}\i) {$a_\i^{#5}$};} {}
        \IfInSetTF{#7}{(\i-1)*8+2} { \squareAround{b{#5}\i}; \node[right, xshift=3pt] at (b{#5}\i) {$b_\i^{#5}$};} {}
        \IfInSetTF{#7}{(\i-1)*8+6} { \squareAround{f{#5}\i}; \node[left, xshift=-2pt] at (f{#5}\i) {$f_\i^{#5}$};} {}
        \IfInSetTF{#7}{(\i-1)*8+7} { \squareAround{g{#5}\i}; \node[left, xshift=-2pt] at (g{#5}\i) {$g_\i^{#5}$};} {}
        \ifthenelse{\i=#1}{
            \IfInSetTF{#7}{(\i-1)*8+8} { \squareAround{h{#5}\i}; \node[right, xshift=3pt] at (h{#5}\i) {$h_\i^{#5}$};} {}
        }{
            \IfInSetTF{#7}{(\i-1)*8+8} { \squareAround{h{#5}\i}; \node[left, xshift=-2pt] at (h{#5}\i) {$h_\i^{#5}$};} {}
        }
    }

    \IfNoValueT{#8}
    {
        \node[font=\large\bfseries] at (#3 + #2*3*#1/2 - #2/2, #2*4 + #4){$#5$};
    }
}

\newcommand{\squareAround}[2][0.125]{%
  \draw[thick] ($(#2)+(-#1,-#1)$) rectangle ($(#2)+(#1,#1)$);%
}

\ExplSyntaxOn
\NewDocumentCommand{\IfInSetTF}{m m m m}{%
  \tl_clear:N \l_tmpa_tl
  \clist_map_inline:nn {#1}{
    \int_compare:nNnTF { \int_eval:n{#2} } = { \int_eval:n{##1} }
      { \tl_set:Nn \l_tmpa_tl {hit} \clist_map_break: }
      { }
  }
  \tl_if_eq:NnTF \l_tmpa_tl {hit} {#3}{#4}
}
\ExplSyntaxOff

\tikzset{My Style/.style={draw, circle, fill=black,scale=0.5}} 

\title{On $1$-limited and $(1,2)$-domination in cubic graphs}

\author{
Goran Radi\'c \\
\small \it University of Maribor, FNM, Koro\v ska cesta 160, 2000, Maribor, Slovenia
\\
\small \tt radic.goran@gmail.com  \\
\and
 Aleksandra Tepeh \\
\small \it University of Maribor, FEECS, Koro\v ska cesta 46, 2000, Maribor, Slovenia \\
\small \it 
Institute of Mathematics, Physics and Mechanics,  Jadranska ul.~19, 1000 Ljubljana, Slovenia \\
\small \it University of Maribor, FNM, Koro\v ska cesta 160, 2000, Maribor, Slovenia\\
\small \tt aleksandra.tepeh@um.si \\ 
}

\begin{document}
\maketitle

\begin{abstract}
A dominating set $D$ of a graph is called $1$-limited if every vertex
of $D$ has at most one neighbor outside $D$, while a $(1,2)$-dominating
set is a dominating set in which every vertex of the set has at least
two neighbors within the set. These two notions coincide on cubic
graphs. We prove that the decision problem
\textsc{1-Limited Dominating Set} is $\mathsf{NP}$-complete even when
restricted to $2$-connected planar cubic graphs, thereby completing the
known complexity results for \textsc{$k$-Limited Dominating Set} for
all fixed positive integers $k$. We also determine the exact
$1$-limited domination number of the entire Goldberg family. This
provides a further infinite family of cubic graphs supporting several
open conjectures and proposed bounds concerning $(1,2)$-domination and
induced cycles.
\end{abstract}

\noindent
{\bf Keywords:} cubic graphs, $1$-limited domination, $(1,2)$-domination, Goldberg snarks, NP-completeness

\medskip

\noindent
{\bf Mathematics Subject Classification 2020:} 05C69, 68Q25


\section{Introduction and Preliminaries}

Throughout the paper, all graphs are finite and simple. For a graph
$G$, we write $V(G)$ and $E(G)$ for its vertex and edge sets,
respectively. For a positive integer $k$, we write
$[k]=\{1,\ldots,k\}$.

Domination is one of the central and most extensively studied concepts
in graph theory. A \emph{dominating set} of a graph $G$ is a set
$D\subseteq V(G)$ such that every vertex outside $D$ has a neighbor
in $D$, and the minimum cardinality among all dominating sets of $G$ is the
\emph{domination number} $\gamma(G)$. Numerous variants of domination
have been introduced by imposing additional local requirements on the
dominating set or on the vertices outside it; see, for example,
\cite{HHH1,HHH2,HHH3}.

A recent addition to the family of domination parameters is
$k$-limited domination. Let $k$ be a positive integer. A dominating
set $D$ of a graph $G$ is called \emph{$k$-limited} if every vertex
of $D$ has at most $k$ neighbors outside $D$. The minimum cardinality
among all $k$-limited dominating sets of $G$ is the \emph{$k$-limited domination number}, denoted by
$\gamma_k^{\mathrm L}(G)$. The condition that every vertex of $D$ has
at most $k$ neighbors outside $D$ will be referred to as the
\emph{$k$-limited constraint}. The parameter was introduced and
systematically studied in \cite{podgorica1}, where general bounds,
structural properties, and connections with several classical
domination parameters were established. A subsequent study
\cite{San-samost} initiated the investigation of $k$-limited
domination in Cartesian products, establishing sharp general bounds
for $\gamma_k^{\mathrm L}(G\square H)$ and exact values for several
natural families of graph products.

One of the connections established in \cite{podgorica1} is
particularly useful for regular graphs. For a positive integer $t$
and a graph $G$ with minimum degree at least $t$, a
\emph{$(1,t)$-dominating set} is a dominating set $D$ such that every vertex
of $D$ has at least $t$ neighbors in $D$, and $\gamma_{1,t}(G)$
denotes the minimum cardinality of such a set. If $G$ is a connected
$d$-regular graph and $1\le k<d$, then
$\gamma_k^{\mathrm L}(G)=\gamma_{1,d-k}(G)$,
\cite{podgorica1}. Consequently, on cubic graphs,
\[
\gammaenaL(G)=\gamma_{1,2}(G).
\]

The $(1,2)$-domination number of cubic graphs was studied in
\cite{fakhran}, where the sharp bounds
$n/2\le\gamma_{1,2}(G)\le3n/4$
were established for every connected cubic graph $G$ of order $n$.
More recently, the relationship between $(1,2)$-domination and the
\emph{induced cycles vertex number} $\cind(G)$ was investigated in
\cite{TepehErves}. Here $\cind(G)$ is the maximum cardinality of a
vertex set inducing a $2$-regular subgraph. This study revealed a close
connection between the two parameters and led to several open
questions concerning cubic graphs. Exact results are also known for
particular structured families of cubic graphs; in particular, the
$(1,2)$-domination number of flower snarks was determined in
\cite{FLOWER}. This forms part of a broader recent interest in
domination-related parameters on snarks
\cite{Zhao,Maksimovic,Kratica,Burdet,Luiz,Pereira}.

Snarks are connected bridgeless cubic graphs that are not
$3$-edge-colorable and have played an important role in cubic graph
theory for many decades; see the survey~\cite{Bradley}. The Petersen
graph was the first known example, while the first explicitly
constructed infinite families were obtained by Isaacs~\cite{Isaacs}.
Shortly afterwards, Goldberg~\cite{Goldberg} introduced another
infinite family, now known as the Goldberg snarks. Their repetitive
block structure makes them a natural setting for studying local
domination conditions. Several domination parameters have already been
considered on this family, including connected and tree domination
\cite{Zhao}, Roman and independent Roman domination \cite{Luiz}, and
domination and independent domination \cite{Pereira}.

One of the main goals of the present paper is to determine the exact
$1$-limited domination number of this family; see
Theorem~\ref{glavnithm}. The main difficulty lies in the lower bound,
which is obtained through a local analysis of block weights together
with global compensation arguments. Our exact result also provides
further evidence for several open problems and conjectured bounds
concerning cubic graphs, which we discuss in more detail in the
concluding section.

We also address the computational complexity of $1$-limited
domination. It was recently proved that, for every fixed integer
$k\ge2$, \textsc{$k$-Limited Dominating Set} is $\mathsf{NP}$-complete
\cite{San-samost}. We settle the remaining case $k=1$ by showing that
\textsc{1-Limited Dominating Set} is $\mathsf{NP}$-complete even when restricted
to $2$-connected planar cubic graphs. Thus, together with the known
results for $k\ge2$, NP-completeness is established for every fixed
positive integer $k$. Since $1$-limited domination and
$(1,2)$-domination coincide on cubic graphs, the same hardness result
also holds for \textsc{$(1,2)$-Dominating Set}.

The paper is organized as follows. In Section~2 we establish the
NP-completeness of \textsc{1-Limited Dominating Set} for
$2$-connected planar cubic graphs. In Section~3 we introduce Goldberg
snarks and determine their exact $1$-limited, equivalently
$(1,2)$-domination, number. We conclude by discussing how these exact
values relate to several open problems on $(1,2)$-domination and
induced cycles in cubic graphs.

\section{Computational Complexity}

In this section, we investigate the computational complexity of
$1$-limited domination. We begin by formulating the corresponding
decision problem.

\begin{table}[h!]
\centering
\begin{tabular}{|l|}
\hline
\textsc{Instance:} A graph $G$ and a positive integer $k$.\\
\hline
\textsc{Question:} Does $G$ admit a $1$-limited dominating set $D$
with $|D|\le k$?\\
\hline
\end{tabular}
\caption{\textsc{1-Limited Dominating Set}}
\label{table:1limited}
\end{table}

Given a set $D\subseteq V(G)$, one can verify in polynomial time
whether $D$ dominates $G$ and whether every vertex of $D$ has at most
one neighbor outside $D$. Hence \textsc{1-Limited Dominating Set} belongs
to $\mathsf{NP}$. We will show that this decision problem is actually $\mathsf{NP}$-complete. To this end, it remains to prove that it is $\mathsf{NP}$-hard.
\medskip

To prove $\mathsf{NP}$-hardness, we give a polynomial reduction from
\textsc{Vertex Cover}, which remains $\mathsf{NP}$-hard even when
restricted to $2$-connected planar cubic graphs~\cite[Theorem~4.1(a)]{mohar2001}.
Recall that a \emph{vertex cover} of a graph $G$ is a set
$C\subseteq V(G)$ such that every edge of $G$ has at least one
endvertex in $C$.

\begin{table}[h!]
\centering
\begin{tabular}{|l|}
\hline
\textsc{Instance:} A graph $G$ and a positive integer $k$.\\
\hline
\textsc{Question:} Does $G$ admit a vertex cover $C$ with
$|C|\le k$?\\
\hline
\end{tabular}
\caption{\textsc{Vertex Cover}}
\label{table:VC}
\end{table}

Let $(G,k)$ be an instance of \textsc{Vertex Cover}, where
$V(G)=\{v_1,\dots,v_n\}$ and $m=|E(G)|$.
We construct a graph $G'$ by replacing each edge of $G$ with a
constant-size gadget. More precisely, for every edge
$v_iv_j\in E(G)$, we remove the edge $v_iv_j$ and insert a gadget
$Q^{ij}$ connecting $v_i$ and $v_j$, defined as follows.

We first introduce two triangles
$a^{ij}c^{ij}d^{ij}a^{ij}$ and 
$b^{ij}e^{ij}f^{ij}b^{ij}$,
which we call the \emph{left triangle} and the \emph{right triangle},
respectively. We connect them by adding the edge $d^{ij}e^{ij}$ and
the path
$c^{ij}g^{ij}h^{ij}f^{ij}$.
The subgraph induced by
$\{a^{ij},b^{ij},c^{ij},d^{ij},e^{ij},f^{ij},
  g^{ij},h^{ij}\}$
will be called the \emph{base part} of $Q^{ij}$.
Next, we construct a diamond on the vertex set
$\{i^{ij},j^{ij},k^{ij},\ell^{ij}\}$.
More precisely, we take the $4$-cycle
$i^{ij}k^{ij}j^{ij}\ell^{ij}i^{ij}$
and add the edge $k^{ij}\ell^{ij}$. 
Finally, we connect the diamond to the base part by adding the edges
$i^{ij}g^{ij}$ and $j^{ij}h^{ij}$, and connect the gadget to the
original vertices by adding the edges $v_ia^{ij}$ and $v_jb^{ij}$.
The resulting gadget is shown in Figure~\ref{fig:Qij}.

\begin{figure}[ht!]
\begin{center}
\begin{tikzpicture}[scale=0.9]

\node [My Style, name=vi, label=below:$v_i$] at (-4,0) {};
\node [My Style, name=vj, label=below:$v_j$] at (7,0) {};

\node [My Style, name=a, label=below:$a^{ij}$] at (-2.5,0) {};
\node [My Style, name=c, label=above:$c^{ij}$] at (-1.6,0.8) {};
\node [My Style, name=d, label=below:$d^{ij}$] at (-1.6,-0.8) {};

\node [My Style, name=g, label=below:$g^{ij}$] at (0,0.8) {};
\node [My Style, name=h, label=below:$h^{ij}$] at (3,0.8) {};

\node [My Style, name=e, label=below:$e^{ij}$] at (4.6,-0.8) {};
\node [My Style, name=f, label=above:$f^{ij}$] at (4.6,0.8) {};
\node [My Style, name=b, label=below:$b^{ij}$] at (5.5,0) {};

\node [My Style, name=i, label=left:$i^{ij}$] at (0,2.6) {};
\node [My Style, name=j, label=right:$j^{ij}$] at (3,2.6) {};
\node [My Style, name=l, label=below:$\ell^{ij}$] at (1.5,1.8) {};
\node [My Style, name=k, label=above:$k^{ij}$] at (1.5,3.6) {};

\draw[thick] (vi)--(a);
\draw[thick] (b)--(vj);

\draw[thick] (a)--(c);
\draw[thick] (a)--(d);
\draw[thick] (c)--(d);

\draw[thick] (b)--(e);
\draw[thick] (b)--(f);
\draw[thick] (e)--(f);

\draw[thick] (c)--(g);
\draw[thick] (g)--(h);
\draw[thick] (h)--(f);
\draw[thick] (d)--(e);

\draw[thick] (g)--(i);
\draw[thick] (h)--(j);
\draw[thick] (i)--(k);
\draw[thick] (j)--(k);
\draw[thick] (i)--(l);
\draw[thick] (j)--(l);
\draw[thick] (k)--(l);

\draw[rounded corners=12pt, thick] (-3.1,-1.5) rectangle (6.1,4.3);
\node[font=\large\bfseries] at (1.5,4.7) {$Q^{ij}$};

\end{tikzpicture}
\end{center}
\caption{The gadget $Q^{ij}$ replacing the edge $v_iv_j$.}
\label{fig:Qij}
\end{figure}

We denote the resulting graph by $G'$. Since every edge of $G$ is
replaced by a gadget containing $12$ new vertices,
$|V(G')|=n+12m$. The construction can clearly be carried out in polynomial time.

The vertices of $G$ remain vertices of $G'$ and will be called the
\emph{original vertices}, and all other vertices will be called
\emph{gadget vertices}. Every original vertex has the same degree in
$G'$ as in $G$, since each incident edge is replaced by exactly one
edge joining it to the corresponding gadget. Moreover, every gadget
vertex has degree three. Consequently, if $G$ is cubic, then $G'$ is
cubic.
The construction clearly preserves planarity and, since each edge is
replaced by a $2$-connected gadget attached at its endvertices,
also preserves $2$-connectivity.

The following proposition establishes the correctness of the
reduction.

\begin{proposition}
\label{NP}
Let $G'$ be the graph obtained from a graph $G$ with
$m=|E(G)|$ by replacing each edge $v_iv_j$ with the gadget
$Q^{ij}$ as described above. Then $G$ has a vertex cover of cardinality at most $k$ if
and only if $G'$ has a $1$-limited dominating set of cardinality at
most $k+8m$.
\end{proposition}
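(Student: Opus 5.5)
The plan is to prove the two directions separately. The backward direction rests on a local lemma about a single gadget $Q^{ij}$: a lower bound of $8$ on the number of gadget vertices of $Q^{ij}$ in any $1$-limited dominating set, with equality only in a very restricted situation. Throughout I use that a vertex $x\in D$ of degree $3$ satisfies the $1$-limited constraint if and only if at least two of its neighbours lie in $D$.

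\emph{Forward direction.} Let $C$ be a vertex cover with $|C|\le k$, and put $C\subseteq D$. For each edge $v_iv_j$, fix an endpoint in $C$, say $v_i$, and add a fixed set $S^{ij}$ of exactly $8$ gadget vertices of $Q^{ij}$. The set $S^{ij}$ must satisfy three requirements.
\begin{itemize}
\item It contains $a^{ij}$, so that every $v\in C$ has all its gadget neighbours in $D$ and therefore has no neighbour outside $D$.
\item It dominates every vertex of $Q^{ij}$ except possibly $a^{ij}$, which is already dominated by $v_i$.
\item Every vertex of $S^{ij}$ has at least two neighbours in $S^{ij}\cup\{v_i\}$.
\end{itemize}
The construction must also cover the case where $v_j\in C$ as well. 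Then $b^{ij}$ is needed in $D$, or at least $v_j$ must have at most one outside neighbour overall. The safest choice is to use sets containing both $a^{ij}$ and $b^{ij}$ whenever both ends are in $C$, or to check this case separately.

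I would search for $S^{ij}$ by hand. The triangle $a^{ij}c^{ij}d^{ij}$ together with the diamond is natural, because each of these vertices then has two $D$-neighbours inside its own clique-like piece. The remaining difficulty is to dominate $f^{ij}$ and $b^{ij}$ with only one more vertex, and I would adjust the choice (for instance, trading $g^{ij}$ for a vertex of the right triangle) until all three requirements are met. This yields $|D|\le k+8m$.

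\emph{Backward direction.} Let $D$ be a $1$-limited dominating set of $G'$ with $|D|\le k+8m$. I would prove the following gadget lemma.
\begin{itemize}
\item[(i)] $|D\cap V(Q^{ij})|\ge 8$ for every gadget.
\item[(ii)] If neither $v_i$ nor $v_j$ is in $D$, then $|D\cap V(Q^{ij})|\ge 9$.
\end{itemize}

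For (i), the diamond is handled first. The vertices $k^{ij}$ and $\ell^{ij}$ have all their neighbours inside the diamond, so some diamond vertex lies in $D$. Propagating the constraint that each $D$-vertex has two $D$-neighbours then forces most of the diamond together with $g^{ij}$ or $h^{ij}$ into $D$. The same argument applied to the two triangles forces further vertices into $D$. I would organise this as a case split on $|D\cap\{i^{ij},j^{ij},k^{ij},\ell^{ij}\}|$.

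For (ii), if $a^{ij}\in D$ but $v_i\notin D$, then $c^{ij},d^{ij}\in D$ are forced, and similarly on the right. If neither $a^{ij}$ nor $b^{ij}$ is dominated from outside, then both triangles must be dominated internally. The case analysis should show that this costs an extra vertex.

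Given the lemma, the normalisation is as follows. Let $C'=D\cap V(G)$. For every edge $v_iv_j$ with $v_i,v_j\notin C'$, charge the surplus gadget vertex of $Q^{ij}$ to the edge, and add $v_i$ to the cover. This gives a vertex cover of size at most
\[
|C'|+\sum_{ij}\bigl(|D\cap V(Q^{ij})|-8\bigr)\le |D|-8m\le k .
\]

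\emph{Main obstacle.} The main obstacle is the gadget lemma, and part (ii) in particular. The case analysis interacts with whether $a^{ij}$ and $b^{ij}$ are dominated by $v_i$ and $v_j$ or only from inside the gadget. There is also the subtlety that $v_i\in D$ imposes constraints on the other gadgets at $v_i$. I would first enumerate the minimal configurations on the diamond plus $\{g^{ij},h^{ij}\}$, and then extend each configuration to the two triangles.
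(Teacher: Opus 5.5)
Your overall architecture matches the paper's. In the forward direction you use one $8$-vertex set per gadget. In the converse you use the gadget bounds $|D\cap V(Q^{ij})|\ge 8$, and $\ge 9$ when $v_i,v_j\notin D$, and then add one endpoint of each uncovered edge; that final count is correct. However, both directions have a genuine gap.

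\textbf{Forward direction.} You never exhibit $S^{ij}$, and your three requirements are not sufficient, because they ignore domination of the non-chosen endpoint. If $v_j\notin C$, every edge at $v_j$ has its other end chosen, so $v_j$ is dominated only if $b^{ij}\in S^{ij}$ for some such edge. Yet, with $v_i$ chosen, the set $\{a^{ij},d^{ij},e^{ij},f^{ij},h^{ij},j^{ij},k^{ij},\ell^{ij}\}$ satisfies all three of your conditions and omits $b^{ij}$. So $b^{ij}\in S^{ij}$ must be imposed in every case, not only when $v_j\in C$. Your suggested starting point (the triangle $a^{ij}c^{ij}d^{ij}$ plus the whole diamond) also cannot be completed by one further vertex. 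The saving comes from the fact that three diamond vertices suffice. With $v_i$ chosen, $\{a^{ij},d^{ij},b^{ij},e^{ij},f^{ij},i^{ij},k^{ij},\ell^{ij}\}$ works, and this is exactly the set the paper uses.

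\textbf{Converse.} The gadget lemma is the entire content, and you leave it as a case analysis that should work out. Moreover, the one propagation step you do state is false. A $1$-limited dominating set can meet the diamond in $\{j^{ij},k^{ij},\ell^{ij}\}$ with $g^{ij},h^{ij}\notin D$; this happens for $\{a^{ij},c^{ij},d^{ij},e^{ij},b^{ij},j^{ij},k^{ij},\ell^{ij}\}$ when $v_j\in D$. So nothing forces $g^{ij}$ or $h^{ij}$ into $D$. The missing ingredients are these:
\begin{itemize}
\item[(a)] Every diamond contains at least $3$ vertices of $D$. If $k^{ij}$ or $\ell^{ij}$ is in $D$, two of its diamond neighbours must be in $D$. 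If both are outside $D$, the vertex of $\{i^{ij},j^{ij}\}$ dominating them has two neighbours outside $D$.
\item[(b)] A vertex of $D$ on a triangle forces a second vertex of that triangle into $D$.
\item[(c)] The base part splits into $L=N[c^{ij}]=\{a^{ij},c^{ij},d^{ij},g^{ij}\}$ and $R=N[f^{ij}]=\{b^{ij},e^{ij},f^{ij},h^{ij}\}$, each of which meets $D$.
\end{itemize}
For the bound $8$, assume at most $7$ gadget vertices lie in $D$. First, $|D\cap L|\neq 1$: by (b) the lone vertex would be $g^{ij}$, which forces $e^{ij},b^{ij},f^{ij}\in D$ and $h^{ij}\notin D$, so $g^{ij}$ has two outside neighbours. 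Symmetrically $|D\cap R|\neq 1$. It remains to rule out the tight split $2+2+3$. By (b), $g^{ij},h^{ij}\notin D$. The missing diamond vertex must then be $i^{ij}$ or $j^{ij}$. Dominating $g^{ij}$ (or $h^{ij}$) then forces $c^{ij}$, and hence $a^{ij},d^{ij}$, into $D$ (or the symmetric vertices on the right), a contradiction.

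For the bound $9$, suppose $v_i\notin D$; then $|D\cap L|\ge 3$. Either $a^{ij}\in D$, which forces $c^{ij},d^{ij}$. Or $a^{ij}\notin D$, which forces $c^{ij},d^{ij}$ via domination and (b), and then $g^{ij}$ via the constraint at $c^{ij}$. Symmetrically $|D\cap R|\ge 3$ when $v_j\notin D$, and (a) completes the count.
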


\begin{proof}
Let $C$ be a vertex cover of $G$ with $|C|\le k$. We regard $C$ as a
subset of $V(G')$. For each edge $v_iv_j\in E(G)$, fix one of its endvertices that
belongs to $C$. If the
chosen endvertex is $v_j$, let
\[
D^{ij}=
\{a^{ij},c^{ij},d^{ij},e^{ij},b^{ij},
  j^{ij},k^{ij},\ell^{ij}\},
\]
and if the chosen endvertex is $v_i$, let, symmetrically,
\[
D^{ij}=
\{b^{ij},e^{ij},f^{ij},d^{ij},a^{ij},
  i^{ij},k^{ij},\ell^{ij}\}.
\]
Set
\[
D=C\cup\bigcup_{v_iv_j\in E(G)}D^{ij}.
\]
Each set $D^{ij}$ contains exactly eight gadget vertices, and hence
$|D|=|C|+8m\le k+8m$.

A direct inspection shows that $D^{ij}$ dominates every vertex of
$Q^{ij}$ and that each vertex of $D^{ij}$ has at most one neighbor
outside $D$. Moreover, both $a^{ij}$ and $b^{ij}$ belong to
$D^{ij}$. Thus, every original vertex in $C$ has all its neighbors in
$D$. If $v_i\notin C$, then for each edge $v_iv_j$ incident with
$v_i$ we have $v_j\in C$, so $v_j$ is the chosen endvertex and
$a^{ij}\in D^{ij}$. Hence $v_i$ is dominated as well. Therefore $D$ is a
$1$-limited dominating set of $G'$ with $|D|\le k+8m$.

\medskip

Conversely, let $D$ be a $1$-limited dominating set of $G'$ with
$|D|\le k+8m$.
We first record two simple observations. The $1$-limited constraint implies that if a vertex of a triangle
belongs to $D$, then at least one other vertex of that triangle also
belongs to $D$.
Moreover, every diamond contains at least $3$ vertices of $D$.
Indeed, if $k^{ij}$ or $\ell^{ij}$ belongs to $D$, then at least two
of its three neighbors in the diamond must also belong to $D$.
If neither belongs to $D$, then domination of $k^{ij}$ and
$\ell^{ij}$ requires one of $i^{ij},j^{ij}$ to belong to $D$, but
such a vertex would have both $k^{ij}$ and $\ell^{ij}$ outside $D$,
a contradiction. Thus every diamond contains at least $3$ vertices of $D$.

We claim that every gadget contains at least $8$ vertices of
$D$. Suppose, to the contrary, that
$|D\cap V(Q^{ij})|\le7$. For convenience, set
$L^{ij}=\{a^{ij},c^{ij},d^{ij},g^{ij}\}$ and 
$R^{ij}=\{b^{ij},e^{ij},f^{ij},h^{ij}\}$. Both $L^{ij}$ and $R^{ij}$ contain a vertex
of $D$, since otherwise $c^{ij}$ or $f^{ij}$, respectively, would be
undominated.

Neither set can contain exactly one vertex of $D$. Indeed, if
$|D\cap L^{ij}|=1$, then the triangle observation implies that this
vertex is $g^{ij}$. Thus $a^{ij},c^{ij},d^{ij}\notin D$. Since
$d^{ij}$ must be dominated, $e^{ij}\in D$, and the $1$-limited
constraint at $e^{ij}$ forces $b^{ij},f^{ij}\in D$. As the diamond
already contributes at least three vertices, the assumption
$|D\cap V(Q^{ij})|\le7$ implies $h^{ij}\notin D$. But then
$g^{ij}$ has two neighbors outside $D$, a
contradiction. The argument for $R^{ij}$ is symmetric.

Consequently,
$|D\cap L^{ij}|\ge2$
and
$|D\cap R^{ij}|\ge2$.
Since the diamond contains at least three vertices of $D$ and
$|D\cap V(Q^{ij})|\le7$ we  derive
that 
$|D\cap L^{ij}|=|D\cap R^{ij}|=2$,
and the diamond contains exactly three vertices of $D$. The triangle
observation now implies that $g^{ij},h^{ij}\notin D$. If the only vertex of the diamond outside $D$ were $k^{ij}$ or
$\ell^{ij}$, then $i^{ij}\in D$ would have two neighbors outside $D$.
Hence the unique vertex of the diamond outside $D$ is one of
$i^{ij},j^{ij}$; by symmetry, assume that $i^{ij}\notin D$. Since
$g^{ij},h^{ij},i^{ij}\notin D$, domination of $g^{ij}$ forces
$c^{ij}\in D$, and the $1$-limited constraint at $c^{ij}$ then forces
$a^{ij},d^{ij}\in D$, contradicting
$|D\cap L^{ij}|=2$. Thus
$|D\cap V(Q^{ij})|\ge8$.

We next show that a gadget contributes at least $9$ vertices whenever
$v_i,v_j\notin D$. We claim that in this case
$|D\cap L^{ij}|\ge3$. If $a^{ij}\in D$, then, since
$v_i\notin D$, the $1$-limited constraint at $a^{ij}$ forces
$c^{ij},d^{ij}\in D$. If $a^{ij}\notin D$, then its domination
requires one of $c^{ij},d^{ij}$ to belong to $D$. The triangle
observation forces the other one to belong to $D$ as well, and the
$1$-limited constraint at $c^{ij}$ then forces $g^{ij}\in D$.
Thus $|D\cap L^{ij}|\ge3$. Symmetrically,
$|D\cap R^{ij}|\ge3$. Together with the three vertices forced in the
diamond, this gives
$|D\cap V(Q^{ij})|\ge9$.

Let
$C=D\cap V(G)$
and let
$F=\{v_iv_j\in E(G):v_i,v_j\notin C\}$
be the set of edges not covered by $C$. Since every gadget contributes
at least $8$ vertices to $D$, and every edge of $F$ forces one
additional vertex in its gadget, we have
$|D|\ge |C|+8m+|F|$.
As $|D|\le k+8m$, it follows that
$|C|+|F|\le k$.

For each edge of $F$, choose one of its endvertices, and let $X$ be
the set of chosen vertices. Every edge outside $F$ is covered by $C$,
while every edge in $F$ is covered by $X$. Hence $C\cup X$ is a
vertex cover of $G$. Moreover, $C$ and $X$ are disjoint, and
$|X|\le|F|$, so
\[
|C\cup X|=|C|+|X|\le |C|+|F|\le k.
\]
Thus $G$ has a vertex cover of cardinality at most $k$.
\end{proof}

Proposition~\ref{NP} establishes the correctness of the reduction.
The construction is polynomial and maps $2$-connected planar cubic
graphs to $2$-connected planar cubic graphs. Since \textsc{Vertex Cover} is $\mathsf{NP}$-hard even on
$2$-connected planar cubic graphs~\cite{mohar2001}, while
\textsc{1-Limited Dominating Set} belongs to $\mathsf{NP}$, we obtain
the following result.

\begin{theorem}
\label{thm:NP-complete}
The decision problem \textsc{1-Limited Dominating Set} is
$\mathsf{NP}$-complete even when restricted to $2$-connected planar
cubic graphs.
\end{theorem}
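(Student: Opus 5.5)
The theorem follows by combining membership in $\mathsf{NP}$ with the reduction of Proposition~\ref{NP}. Membership has already been observed: given a candidate set $D$ with $|D|\le k$, one checks in polynomial time that every vertex outside $D$ has a neighbor in $D$, and that every vertex of $D$ has at most one neighbor outside $D$. It therefore remains to establish $\mathsf{NP}$-hardness on the restricted class.

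For hardness, start from an instance $(G,k)$ of \textsc{Vertex Cover}, where $G$ is a $2$-connected planar cubic graph; the problem is $\mathsf{NP}$-hard on this class by \cite[Theorem~4.1(a)]{mohar2001}. Map it to the instance $(G',k+8m)$, where $G'$ is obtained by replacing every edge with the gadget $Q^{ij}$. The map is computable in polynomial time: $G'$ has $n+12m$ vertices and a constant number of edges per gadget, and $k+8m$ is computed trivially. By Proposition~\ref{NP}, $(G,k)$ is a yes-instance if and only if $(G',k+8m)$ is a yes-instance. Hence this is a valid polynomial many-one reduction.

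The remaining point is that $G'$ stays in the restricted class.

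\textbf{Cubic.} Every gadget vertex has degree three, and each original vertex keeps its degree, since each incident edge $v_iv_j$ is replaced by exactly one edge $v_ia^{ij}$ or $v_jb^{ij}$.

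\textbf{Planar.} Take a plane embedding of $G$ and draw each $Q^{ij}$ in a thin neighborhood of the curve representing $v_iv_j$. This works because $Q^{ij}$ together with the two pendant edges to $v_i$ and $v_j$ is planar with $v_i$ and $v_j$ on the outer face, as Figure~\ref{fig:Qij} shows.

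\textbf{$2$-connected.} This is the step needing the most care, since $a^{ij}$ and $b^{ij}$ attach to the rest of $G'$ through single edges. I would argue directly that $G'$ has no cut vertex. First, $Q^{ij}\cup\{v_i,v_j\}$ with the two attaching edges is $2$-connected after adding a virtual edge $v_iv_j$. One checks this by exhibiting two internally disjoint $v_i$--$v_j$ paths through the gadget, one via $d^{ij}e^{ij}$ and one via $c^{ij}g^{ij}h^{ij}f^{ij}$, and noting that the diamond hangs on $g^{ij},h^{ij}$ via two disjoint edges.

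Now remove a vertex $x$.
\begin{itemize}
\item If $x$ is an original vertex, every gadget remains internally connected to at least one of its endpoints. The rest of $G'$ is then connected because $G-x$ is connected, and each gadget incident to $x$ is still attached at its other end.
\item If $x$ lies in a gadget $Q^{ij}$, the rest of that gadget stays connected to $v_i$ or to $v_j$. The original graph minus nothing is connected, and the gadgets of all other edges still provide the remaining connections.
\end{itemize}
Hence $G'$ is $2$-connected, and the theorem follows.
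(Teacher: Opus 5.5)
Your proof is correct and follows the paper's route: membership in $\mathsf{NP}$, the reduction from \textsc{Vertex Cover} on $2$-connected planar cubic graphs using the gadgets $Q^{ij}$ with correctness from Proposition~\ref{NP}, and a check that the construction preserves cubicity, planarity and $2$-connectivity, which you spell out in more detail than the paper's one-line remark. One slip in that extra detail: there are no two internally disjoint $v_i$--$v_j$ paths through $Q^{ij}$, since both must pass through $a^{ij}$ and $b^{ij}$. What you want are the internally disjoint $a^{ij}$--$b^{ij}$ paths $a^{ij}d^{ij}e^{ij}b^{ij}$ and $a^{ij}c^{ij}g^{ij}h^{ij}f^{ij}b^{ij}$; also, the case $x\in\{a^{ij},b^{ij}\}$ needs $G-v_iv_j$ to be connected, which holds because the $2$-connected graph $G$ is bridgeless.
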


The corresponding decision problem \textsc{$(1,2)$-Dominating Set}
asks whether a given graph $G$ has a $(1,2)$-dominating set of
cardinality at most $k$. As observed in the introduction, $1$-limited
domination and $(1,2)$-domination coincide on cubic graphs. Hence
Theorem~\ref{thm:NP-complete} immediately implies the following.

\begin{corollary}
The decision problem
\textsc{$(1,2)$-Dominating Set} is $\mathsf{NP}$-complete even when
restricted to $2$-connected planar cubic graphs.
\end{corollary}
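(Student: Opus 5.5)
The statement to prove is the corollary that \textsc{$(1,2)$-Dominating Set} is $\mathsf{NP}$-complete on $2$-connected planar cubic graphs. The plan is to transfer Theorem~\ref{thm:NP-complete} through the identity $\gammaenaL(G)=\gamma_{1,2}(G)$ for cubic graphs, treating membership in $\mathsf{NP}$ and $\mathsf{NP}$-hardness separately.

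\emph{Membership in $\mathsf{NP}$.} Given a candidate set $D$, one checks in polynomial time two things. First, that $D$ dominates $G$. Second, that every vertex of $D$ has at least two neighbors in $D$.

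\emph{Hardness.} We use the identity reduction from \textsc{1-Limited Dominating Set} restricted to $2$-connected planar cubic graphs: the instance $(G,k)$ is mapped to the same pair $(G,k)$.

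To justify this, I will not lean only on the cited equality $\gammaenaL(G)=\gamma_{1,2}(G)$, since it was stated for connected graphs. Instead I will prove the stronger fact that on a cubic graph the two families of sets coincide. Let $G$ be cubic and $D\subseteq V(G)$ dominating. A vertex $v\in D$ has exactly three neighbors. Hence ``at most one neighbor outside $D$'' is equivalent to ``at least two neighbors inside $D$''. So $D$ is $1$-limited exactly when it is $(1,2)$-dominating.

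Consequently $G$ has a $1$-limited dominating set of size at most $k$ if and only if it has a $(1,2)$-dominating set of size at most $k$. This holds without any connectivity assumption, although our graphs are $2$-connected and hence connected anyway.

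Combining this with Theorem~\ref{thm:NP-complete} gives $\mathsf{NP}$-hardness on the same graph class, and together with membership in $\mathsf{NP}$ this proves the corollary. There is no real obstacle here. The only point needing care is that the equivalence is stated for sets rather than just for the minimum values, which is what makes the identity reduction valid for every threshold $k$.
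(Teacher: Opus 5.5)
Your proposal is correct and follows the paper's argument: the paper likewise obtains the corollary immediately from Theorem~\ref{thm:NP-complete}, using the fact that $1$-limited and $(1,2)$-dominating sets coincide on cubic graphs. Your explicit set-level check (every vertex has degree three, so at most one neighbor outside $D$ is equivalent to at least two inside $D$) and your separate verification of membership in $\mathsf{NP}$ spell out what the paper leaves implicit. They also remove any reliance on the connectedness hypothesis in the cited parameter identity.
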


\section{Goldberg snarks}

We first describe the family of Goldberg snarks, which will be the
main object of the remainder of the paper. The construction is based
on the fixed graph $B$, called the \emph{Goldberg block}, or simply a \emph{block}, shown in
Figure~\ref{fig:goldberg-basic}. It has vertex set
$V(B)=\{a,b,c,d,e,f,g,h\}$ 
and edge set
$E(B)=\{ab,ae,bc,cd,cf,de,dh,eg,fg\}$.
For $k\ge3$, let $B_1,\ldots,B_k$ be pairwise disjoint copies of
$B$, with
$V(B_i)=\{a_i,b_i,c_i,d_i,e_i,f_i,g_i,h_i\}$.

The graph $G_k$ is obtained by joining these copies cyclically: for
every $i\in[k]$, where indices are taken modulo $k$, we add the edges
$b_i a_{i+1}$, $g_i f_{i+1}$ and $h_i h_{i+1}$.
The resulting graph is connected, bridgeless, cubic, and has $8k$
vertices.

\begin{figure}[!ht]
\centering
\begin{tikzpicture}[scale=0.85, every node/.style={font=\small}]
\tikzset{v/.style={draw, circle, fill=black, inner sep=1.5pt}}

\begin{scope}
   \coordinate (aL) at (0,3);
   \coordinate (bL) at (2,3);
   \coordinate (cL) at (0,2);
   \coordinate (dL) at (1,2);
   \coordinate (eL) at (2,2);
   \coordinate (fL) at (0,1);
   \coordinate (gL) at (2,1);
   \coordinate (hL) at (1,0);

   \draw (aL) -- (bL);
   \draw (aL) -- (eL);
   \draw (bL) -- (cL);
   \draw (cL) -- (dL);
   \draw (cL) -- (fL);
   \draw (dL) -- (eL);
   \draw (dL) -- (hL);
   \draw (eL) -- (gL);
   \draw (fL) -- (gL);

   \node[v,label=above:$a$] at (aL) {};
   \node[v,label=above:$b$] at (bL) {};
   \node[v,label=left:$c$] at (cL) {};
   \node[v,label=below right:$d$] at (dL) {};
   \node[v,label=right:$e$] at (eL) {};
   \node[v,label=below:$f$] at (fL) {};
   \node[v,label=below:$g$] at (gL) {};
   \node[v,label=below:$h$] at (hL) {};

   \node at (1,-0.95) {$B$};
\end{scope}

\begin{scope}[xshift=5.2cm]
   \foreach \name/\x/\lab in {I/0/$B_i$,J/3.7/$B_{i+1}$} {
      \coordinate (a\name) at (\x,3);
      \coordinate (b\name) at (\x+2,3);
      \coordinate (c\name) at (\x,2);
      \coordinate (d\name) at (\x+1,2);
      \coordinate (e\name) at (\x+2,2);
      \coordinate (f\name) at (\x,1);
      \coordinate (g\name) at (\x+2,1);
      \coordinate (h\name) at (\x+1,0);

      \draw (a\name) -- (b\name);
      \draw (a\name) -- (e\name);
      \draw (b\name) -- (c\name);
      \draw (c\name) -- (d\name);
      \draw (c\name) -- (f\name);
      \draw (d\name) -- (e\name);
      \draw (d\name) -- (h\name);
      \draw (e\name) -- (g\name);
      \draw (f\name) -- (g\name);

      \foreach \xv in {a,b,c,d,e,f,g,h} {
         \node[v] at (\xv\name) {};
      }

      \node at (\x+1,-0.95) {\lab};
   }

   \draw[dashed] (bI) -- (aJ);
   \draw[dashed] (gI) -- (fJ);
   \draw[dashed] (hI) -- (hJ);

   \node[above] at (bI) {$b_i$};
   \node[above] at (aJ) {$a_{i+1}$};
   \node[below] at (gI) {$g_i$};
   \node[below] at (fJ) {$f_{i+1}$};
   \node[below left] at (hI) {$h_i$};
   \node[below right] at (hJ) {$h_{i+1}$};
\end{scope}

\end{tikzpicture}
\caption{The Goldberg block $B$ and the edges joining two consecutive
copies.}
\label{fig:goldberg-basic}
\end{figure}

For odd $k\ge5$, the graph $G_k$ is a Goldberg snark
\cite{Goldberg}. Nevertheless, since our arguments apply to every
$k\ge3$, we will, with a slight abuse of terminology, refer to all
graphs $G_k$, $k\ge3$, as Goldberg snarks.

For a set $D\subseteq V(G_k)$, let
$w_i=|D\cap V(B_i)|$ for $i\in[k]$.
The cyclic sequence $(w_1,\ldots,w_k)$ is called the
\emph{weight sequence} of $D$. A consecutive subsequence of a weight
sequence will be referred to as a \emph{pattern}. For brevity,
we write patterns without parentheses and commas; for example,
$(4,4,5)$ is written as $445$. All indices are taken modulo $k$, so in
particular $w_0=w_k$ and $w_{k+1}=w_1$.

\medskip

We first establish an upper bound on $\gammaenaL(G_k)$ by
constructing explicit $1$-limited dominating sets. The construction
uses three \emph{configurations}, denoted by $X$, $Y$, and $Z$, which
consist of $3$, $4$, and $5$ consecutive Goldberg blocks,
respectively, see Figure~\ref{XYZ}.

For $P\in\{X,Y,Z\}$, denote by $D_P$ the set of black vertices in
$P$. All vertices of $D_P$ satisfy the $1$-limited constraint within
the configuration, except for the two square boundary vertices, whose
constraint is completed through edges to neighboring configurations.
From Figure~\ref{XYZ}, we further observe that
$|D_X|=13$, $|D_Y|=18$ and $|D_Z|=23$.

\begin{figure}[!ht]
\centering

\begin{tikzpicture}[scale=1]

\def\dis{0.52}

\Goldbergsnark
{3}
{\dis}
{0}
{4.8}
{X}
[2,3,4,8,9,10,11,14,15,19,20,22,24]
[8,24]

\Goldbergsnark
{4}
{\dis}
{12*\dis}
{4.8}
{Y}
[2,3,4,8,9,10,11,14,15,17,18,21,22,23,25,28,29,32]
[8,32]

\Goldbergsnark
{5}
{\dis}
{3.2*\dis}
{1.8}
{Z}
[2,3,4,8,9,10,11,14,15,19,20,21,22,23,25,26,29,30,31,33,36,37,40]
[8,40]

\end{tikzpicture}

\caption{The configurations $X$, $Y$, and $Z$.}

\label{XYZ}
\end{figure}

We are now ready to establish the required upper bound.

\begin{lemma}\label{upper}
Let $k\ge3$. Then

\[
\gammaenaL(G_k)\leq
\begin{cases}
\frac{13}{3}k; & k\equiv0\pmod3\\
\frac{13}{3}k+\frac23; & k\equiv1\pmod3\\
\frac{13}{3}k+\frac43; & k\equiv2\pmod3.
\end{cases}
\]
Moreover, 
$\gammaenaL(G_5)\le 22$.
\end{lemma}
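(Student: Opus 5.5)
The plan is to tile the cyclic sequence of blocks $B_1,\dots,B_k$ by consecutive configurations $X$, $Y$, $Z$ (of $3$, $4$, $5$ blocks) and take $D$ to be the union of the corresponding sets $D_X$, $D_Y$, $D_Z$. Since $|D_X|=13$, $|D_Y|=18=\tfrac{13}{3}\cdot4+\tfrac23$ and $|D_Z|=23=\tfrac{13}{3}\cdot5+\tfrac43$, the cost of a tiling is $\tfrac{13}{3}k$ plus $\tfrac23$ for each $Y$ and $\tfrac43$ for each $Z$. The choices are as follows.
\begin{itemize}
\item For $k\equiv0\pmod3$ we use $k/3$ copies of $X$.
\item For $k\equiv1\pmod3$ (so $k\ge4$) we use one $Y$ and $(k-4)/3$ copies of $X$.
\item For $k\equiv2\pmod3$ (so $k\ge5$) we use one $Z$ and $(k-5)/3$ copies of $X$; alternatively two $Y$'s and $(k-8)/3$ copies of $X$ when $k\ge8$, which costs the same.
\end{itemize}
These give exactly the three stated bounds. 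For $k=5$ the single $Z$ gives $23$, so the improved bound $\gammaenaL(G_5)\le22$ needs a separate ad hoc set of $22$ vertices, exhibited explicitly and checked by hand.

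The verification that $D$ is a $1$-limited dominating set is local. First I will record, by reading Figure~\ref{XYZ}, the boundary behaviour common to all three configurations. The leftmost block of each configuration has the same membership pattern on its left-boundary vertices $a,f,h$; the rightmost block has the same pattern on $b,g,h$. The only vertices whose constraint is not satisfied internally are the two squared ones, $h$ of the first block and $h$ of the last block, both of which are black. Once two configurations are glued, the edge $h_ih_{i+1}$ joins the last $h$ of one configuration to the first $h$ of the next. Both endpoints are black, so each of these $h$-vertices gains a black neighbour across the junction and has at most one neighbour outside $D$.

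Next I check the other two junction edges, $b_ia_{i+1}$ and $g_if_{i+1}$. From the pictured patterns, $b$ of the last block is black while $a$ of the first block is white, and $g$ of the last block and $f$ of the first block are both white. So the edge $ba$ adds no white neighbour to the black $b$ beyond the one allowed, and domination of the white boundary vertices $a$, $f$, $g$ is already achieved inside their own configurations. Since every configuration has identical left and right boundaries, any concatenation, including the cyclic closure, inherits all these properties. Hence the constructed $D$ is valid for every admissible $k$.

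The main obstacle is the $k=5$ refinement, which lies outside the tiling scheme. I would first try a symmetric weight sequence such as $44554$ or $44455$ rearranged cyclically, summing to $22$. I would build it by modifying $D_Z$ so that it closes on itself cyclically, exploiting the extra adjacency created when the last block of $Z$ is glued to its own first block: shift the black vertices near the seam so that one seam vertex becomes redundant. Then I verify domination and the $1$-limited constraint at all $40$ vertices, which is a finite check best presented in a figure.
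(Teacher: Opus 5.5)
For the general bound, your tiling is the paper's argument: cyclic concatenations $X^{q}$, $YX^{q-1}$, $ZX^{q-1}$, with the squared $h$-vertices completing each other's constraint across each junction.

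One detail of your boundary reading is wrong. The vertex $b$ of the last block is \emph{white} in all three configurations; for example, the last block of $X$ meets $D_X$ in $\{c_3,d_3,f_3,h_3\}$. So both junction edges $ba$ and $gf$ join two white vertices. Each of these is already dominated inside its own configuration: $a$ and $f$ of the first block by $b_1$ and $c_1$, and $b$ and $g$ of the last block by a black neighbour in that block. The $h$--$h$ edge is therefore the only junction that matters, and your conclusion is unaffected.

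The genuine gap is the second assertion, $\gammaenaL(G_5)\le22$. It is part of the statement, but you do not establish it: you describe a search rather than exhibit a set, and the search is aimed at an impossible target.

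\begin{enumerate}
\item The sequences $44554$ and $44455$ are the same cyclic sequence. It contains the cyclic run $444$ flanked on both sides by blocks of weight $5$, which Lemma~\ref{weight4}(iv) forbids.
\item In fact the weight sequence of any $22$-element $1$-limited dominating set of $G_5$ is forced. A block of weight $3$ pushes the total to at least $23$ by Lemma~\ref{weight3}.
\item The only other arrangement of $\{4,4,4,5,5\}$, namely $44545$, is excluded by Lemma~\ref{weight4}(i),(iii) together with Lemma~\ref{w=5}, exactly as in the proof of Lemma~\ref{total2}.
\item Hence the only possibility is $44644$. In particular no block may have weight $5$, whereas three blocks of $D_Z$ do, so the required repair of $D_Z$ is not local to the seam.
\end{enumerate}

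What is needed is an explicit set with weight sequence $44644$, as the paper gives in Figure~\ref{G5}:
\[
\{a_1,e_1,f_1,g_1\}\cup\{d_2,e_2,g_2,h_2\}\cup\{a_3,b_3,e_3,f_3,g_3,h_3\}\cup\{a_4,d_4,e_4,h_4\}\cup\{b_5,c_5,f_5,g_5\}.
\]
A direct check of all $40$ vertices shows this is a $1$-limited dominating set of $G_5$ of size $22$.
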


\begin{proof}
A direct inspection of Figure~\ref{XYZ} shows that in each
configuration $P\in\{X,Y,Z\}$ every vertex outside $D_P$ has a
neighbor in $D_P$, and every vertex of $D_P$ has at most one neighbor
outside $D_P$, except for the two square boundary vertices.

When two configurations are placed consecutively, their boundary
blocks are joined by the three edges prescribed in the construction
of $G_k$. We refer to this operation as concatenation. A concatenation
is cyclic if the last configuration is joined to the first in the
same way. In such a concatenation, the corresponding square vertices
are adjacent and complete each other's $1$-limited constraint.
Therefore every cyclic concatenation of $X$, $Y$, and $Z$ determines
a $1$-limited dominating set.

First, let $k=3q$, where $q\ge1$. By cyclically concatenating $q$ copies
of $X$, we obtain a $1$-limited dominating set of $G_k$ of cardinality
   $$q|D_X|=13q=\tfrac{13}{3}k.$$
Next, let $k=3q+1$, where $q\ge1$. The cyclic concatenation
$YX^{q-1}$ consists of
$4+3(q-1)=3q+1=k$
Goldberg blocks and determines a $1$-limited dominating set of size
$$|D_Y|+(q-1)|D_X|
   =18+13(q-1)
   =13q+5
   =\tfrac{13}{3}k+\tfrac23.$$
Finally, let $k=3q+2$, where $q\ge1$. The cyclic concatenation
$ZX^{q-1}$ consists of
$5+3(q-1)=3q+2=k$
Goldberg blocks and determines a $1$-limited dominating set of size
   $$|D_Z|+(q-1)|D_X|
   =23+13(q-1)
   =13q+10
   =\tfrac{13}{3}k+\tfrac43.$$
This proves the general upper bound.

For $k=5$, the configuration $Z$ gives a set of cardinality $23$. The
construction in Figure~\ref{G5} has weight sequence $44644$
and contains $22$ vertices. The black vertices form a $1$-limited
dominating set, and hence $\gammaenaL(G_5)\le22$.
\end{proof}

\begin{figure}[!ht]
\begin{center}
\begin{tikzpicture}[scale=1]

\def\dis{0.52}
\def\x{0} 
\def\y{10} 

\Goldbergsnark{5}{\dis}{\x*\dis}{\y-\dis*6}{}[1,5,6,7,12,13,15,16,17,18,21,22,23,24,25,28,29,32,34,35,38,39][-1][1]

\end{tikzpicture}
\end{center}
    \caption{A $1$-limited dominating set of $G_5$ with weight sequence
$44644$.}
    \label{G5}
\end{figure}

We now turn to the lower bound. Combined with the constructions above,
it will determine the exact value of $\gammaenaL(G_k)$. In the local
arguments below, we will repeatedly use two symmetries of $G_k$. The
first interchanges
\[
a_i\leftrightarrow f_i,\qquad
b_i\leftrightarrow g_i,\qquad
c_i\leftrightarrow e_i
\]
for every $i\in[k]$, while fixing $d_i$ and $h_i$. The second reverses
the cyclic order of the blocks, interchanging the left and right sides
of each block. 

Throughout this section, let $k\ge3$ and let $D$ be a minimum $1$-limited dominating
set of $G_k$. Since
$|D|=\sum_{i=1}^{k}w_i$,
a lower bound on $|D|$ can be obtained by deriving suitable lower
bounds on the total weight of consecutive blocks.

We begin with the following basic lower bound on the weight of a single block.

\begin{lemma}    
\label{wi3}
For every $i\in[k]$, we have $w_i\ge3$.
\end{lemma}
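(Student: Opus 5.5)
We argue by case analysis inside a single block $B_i$, using only the local structure of $B$ and the fact that, since $G_k$ is cubic, the $1$-limited constraint says exactly that every vertex of $D$ has at least two neighbors in $D$. The vertices $c_i$, $d_i$, $e_i$ are useful because all their neighbors lie in $B_i$: we have $N[c_i]=\{b_i,c_i,d_i,f_i\}$, $N[d_i]=\{c_i,d_i,e_i,h_i\}$ and $N[e_i]=\{a_i,d_i,e_i,g_i\}$, all contained in $V(B_i)$. The split is on whether $d_i\in D$.

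\emph{Case $d_i\in D$.} Then $d_i$ has at least two neighbors in $D$. All three of its neighbors $c_i,e_i,h_i$ lie in $B_i$, so $w_i\ge 3$.

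\emph{Case $d_i\notin D$.} The vertex $c_i$ must be dominated, so some $x\in\{b_i,c_i,f_i\}$ belongs to $D$. Likewise $e_i$ must be dominated, so some $y\in\{a_i,e_i,g_i\}$ belongs to $D$. These two sets are disjoint, so $x\neq y$ and we already have two vertices of $D$ in $B_i$. We find a third as follows.

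\begin{itemize}
\item If $c_i\in D$, then $c_i$ needs two neighbors in $D$ among $b_i,d_i,f_i$. Since $d_i\notin D$, both $b_i,f_i\in D$. Together with $y$ this gives at least $4$ vertices of $D$ in $B_i$.
\item If $e_i\in D$, the symmetric argument (via the symmetry $a\leftrightarrow f$, $b\leftrightarrow g$, $c\leftrightarrow e$) gives $a_i,g_i\in D$, so again $w_i\ge 4$.
\item If $c_i,e_i\notin D$, then $d_i$ has neighbors $c_i,e_i,h_i$ and is not in $D$, so domination of $d_i$ forces $h_i\in D$. Since $h_i\notin\{b_i,c_i,f_i\}\cup\{a_i,e_i,g_i\}$, the three vertices $x,y,h_i$ are distinct, and $w_i\ge3$.
\end{itemize}

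There is no real obstacle here. The only point needing care is to use the three vertices $c_i,d_i,e_i$, whose closed neighborhoods lie entirely in $B_i$, so that neither domination nor the constraint can be satisfied from outside the block.
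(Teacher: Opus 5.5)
Your proof is correct and takes essentially the same approach as the paper. You split on whether $d_i\in D$, use the $1$-limited constraint at $c_i$ or $e_i$ when one of them lies in $D$, and in the remaining case combine $h_i\in D$ (from domination of $d_i$) with the dominators of $c_i$ and $e_i$. The only difference is cosmetic: you extract those dominators up front, which gives the slightly stronger $w_i\ge 4$ in the first two subcases.
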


\begin{proof}
Fix $i\in[k]$. If $d_i\in D$, then, since $D$ is $1$-limited, at least
two of $c_i,e_i,h_i$ belong to $D$, and hence $w_i\ge3$.
Suppose now that $d_i\notin D$. Since $D$ is dominating, at least one of
$c_i,e_i,h_i$ belongs to $D$. If $c_i\in D$, then both $b_i$ and $f_i$
belong to $D$, and hence $w_i\ge3$. Similarly, if $e_i\in D$, then both
$a_i$ and $g_i$ belong to $D$, and again $w_i\ge3$.
It remains to consider the case where $h_i\in D$ and
$c_i,e_i\notin D$. Since $c_i$ and $e_i$ must be dominated and
$d_i\notin D$, at least one of $b_i,f_i$ and at least one of $a_i,g_i$
belong to $D$. Together with $h_i\in D$, this gives $w_i\ge3$.
\end{proof}

Although every block has weight at least three, equality is highly
restrictive. The next lemma characterizes blocks of weight
$3$ and shows that each of their neighboring blocks has weight at
least $6$.

\begin{lemma}
\label{weight3}
If $w_i=3$, then
$w_{i-1}\ge6$ and $w_{i+1}\ge6$. Moreover, if $w_{i-1}=6$, then $d_{i-1}\notin D$, and if
$w_{i+1}=6$, then $d_{i+1}\notin D$. 
\end{lemma}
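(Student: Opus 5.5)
The plan is to first pin down exactly what a block of weight $3$ looks like, and then to propagate the forced memberships into $B_{i+1}$. The statement for $B_{i-1}$ will then follow from the reversal symmetry, and the two possible shapes of a weight-$3$ block are exchanged by the symmetry $a\leftrightarrow f$, $b\leftrightarrow g$, $c\leftrightarrow e$.

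\emph{Step 1: structure of a weight-$3$ block.} I would refine the case analysis in the proof of Lemma~\ref{wi3}.
\begin{itemize}
\item Suppose $d_i\in D$. Then two of $c_i,e_i,h_i$ lie in $D$. If $c_i\in D$, the $1$-limited constraint at $c_i$ forces one of $b_i,f_i$ into $D$. If $e_i\in D$, it forces one of $a_i,g_i$. Any admissible pair therefore yields $w_i\ge4$.
\item Suppose $d_i\notin D$ and $c_i\in D$. Then $b_i,f_i\in D$, and $e_i$ is left undominated. The case $e_i\in D$ is symmetric.
\end{itemize}
Hence $D\cap V(B_i)=\{h_i,x,y\}$ with $x\in\{b_i,f_i\}$ and $y\in\{a_i,g_i\}$.

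The mixed choices are impossible. If $x=b_i$ and $y=g_i$, then $b_i$ has both $c_i$ and $a_i$ outside $D$. If $x=f_i$ and $y=a_i$, then $a_i$ has both $b_i$ and $e_i$ outside $D$. Up to the first symmetry we may therefore assume $D\cap V(B_i)=\{a_i,b_i,h_i\}$.

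\emph{Step 2: forced vertices in $B_{i+1}$.}
\begin{itemize}
\item $h_i$ has $d_i\notin D$, so $h_{i+1}\in D$ (and likewise $h_{i-1}\in D$).
\item $b_i$ has $c_i\notin D$, so $a_{i+1}\in D$.
\item $g_i$ must be dominated, and $e_i,f_i\notin D$, so $f_{i+1}\in D$.
\item $f_{i+1}$ has its neighbor $g_i$ outside $D$, so $c_{i+1},g_{i+1}\in D$.
\end{itemize}
This gives the five vertices $a_{i+1},c_{i+1},f_{i+1},g_{i+1},h_{i+1}$ in $D$.

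\emph{Step 3: a sixth vertex, and the case $w_{i+1}=6$.} The vertex $c_{i+1}$ has at most one neighbor outside $D$, so it needs $b_{i+1}$ or $d_{i+1}$ in $D$. This gives $w_{i+1}\ge6$. The vertex $a_{i+1}$, whose neighbor $b_i$ is in $D$, still needs $b_{i+1}$ or $e_{i+1}$ in $D$.

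Suppose now that $w_{i+1}=6$ and $d_{i+1}\in D$. Then $d_{i+1}$ is the sixth vertex, so $b_{i+1},e_{i+1}\notin D$. But then $a_{i+1}$ has both $b_{i+1}$ and $e_{i+1}$ outside $D$, a contradiction. So $w_{i+1}=6$ forces $d_{i+1}\notin D$, with $b_{i+1}$ as the sixth vertex.

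\emph{Step 4: the left neighbour.} Running the same argument towards $B_{i-1}$, or applying the reversal symmetry, gives $w_{i-1}\ge6$, and $d_{i-1}\notin D$ whenever $w_{i-1}=6$.

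I expect Step 1 to be the main obstacle. It requires a careful and exhaustive check that no other $3$-element configuration dominates the block while satisfying the constraints. Once $\{a_i,b_i,h_i\}$ is established, the propagation in Steps 2 and 3 is mechanical.
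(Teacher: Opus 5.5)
Your proof is correct and follows essentially the same route as the paper. Both arguments pin down $D\cap V(B_i)\in\{\{a_i,b_i,h_i\},\{f_i,g_i,h_i\}\}$ and then force $a_{i+1},c_{i+1},f_{i+1},g_{i+1},h_{i+1}\in D$. They then use the constraints at $c_{i+1}$ and $a_{i+1}$, whose only common choice is $b_{i+1}$, to get $w_{i+1}\ge6$, and $d_{i+1}\notin D$ when $w_{i+1}=6$, finishing by symmetry. The differences are cosmetic: you exclude the mixed triples via the $1$-limited constraint at $b_i$ or $a_i$ rather than excluding $b_i,f_i\in D$ simultaneously, and the fact that $h_i\in D$ (needed to dominate $d_i$) is left implicit in your Step 1.
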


\begin{proof}
Suppose that $w_i=3$. We first show that $c_i,e_i\notin D$. Assume
that $c_i\in D$. Since $D$ is $1$-limited, at least two vertices of
$\{b_i,d_i,f_i\}$ belong to $D$. If $d_i\in D$, then at least one of
$e_i,h_i$ also belongs to $D$, giving $w_i\ge4$. If $d_i\notin D$,
then $b_i,f_i\in D$, and since $w_i=3$, the vertex $e_i$ is
undominated. Both cases are contradictions. Thus $c_i\notin D$, and
similarly $e_i\notin D$.

It follows that $d_i\notin D$ and hence $h_i\in D$. The vertex $c_i$
must be dominated by $b_i$ or $f_i$. They cannot both belong to $D$,
since then $a_i,g_i\in D$, contradicting $w_i=3$. Hence exactly one
of $b_i,f_i$ belongs to $D$. If $b_i\in D$, then $a_i\in D$, while
if $f_i\in D$, then $g_i\in D$. Therefore
$D\cap V(B_i)=\{a_i,b_i,h_i\}$
or
$D\cap V(B_i)=\{f_i,g_i,h_i\}$.

We consider the first case, and the second is symmetric. Since
$D\cap V(B_i)=\{a_i,b_i,h_i\}$, we obtain
$\{a_{i+1},c_{i+1},f_{i+1},g_{i+1},h_{i+1}\}\subseteq D$.
The $1$-limited constraints at $a_{i+1}$ and $c_{i+1}$ require,
respectively, at least one of $\{b_{i+1},e_{i+1}\}$ and at least one
of $\{b_{i+1},d_{i+1}\}$ to belong to $D$. Hence $w_{i+1}\ge6$.
If $w_{i+1}=6$, the unique additional vertex is $b_{i+1}$, and thus
$d_{i+1}\notin D$.
By symmetry, $w_{i-1}\ge6$, and if $w_{i-1}=6$, then
$d_{i-1}\notin D$.
\end{proof}

Thus a block of weight $3$ is strongly compensated by its
neighbors. Hence low average weight can only persist through stretches
of blocks of weight $4$.

\begin{lemma}
\label{weight4}
Let $i\in[k]$. Then the following statements hold.
\begin{enumerate}
    \item[\rm(i)] If $w_i=4$, then
    $d_i\in D$ if and only if $h_i\in D$.

    \item[\rm(ii)] If $w_i=4$ and $d_i,h_i\in D$, then one of the
following holds:
\begin{enumerate}
    \item[(a)]
    $\{a_{i+1},c_{i+1},f_{i+1},g_{i+1}\}\subseteq D$ and
    at least one of $b_{i+1},e_{i+1}$ belongs to $D$;
    \item[(b)]
    $\{a_{i-1},b_{i-1},c_{i-1},g_{i-1}\}\subseteq D$ and
    at least one of $e_{i-1},f_{i-1}$ belongs to $D$.
\end{enumerate}
In particular, $w_{i-1}\ge5$ or $w_{i+1}\ge5$.

    \item[\rm(iii)] If $w_iw_{i+1}=44$, $d_i\notin D$, and
    $d_{i+1}\in D$, then $w_{i+2}\ge6$. Symmetrically, if
    $d_i\in D$ and $d_{i+1}\notin D$, then $w_{i-1}\ge6$.

    \item[\rm(iv)] If $w_iw_{i+1}w_{i+2}=444$, then
    $d_{i+1},h_{i+1}\notin D$, and
    $w_{i-1}\ge6$ or $w_{i+3}\ge6$.
\end{enumerate}
\end{lemma}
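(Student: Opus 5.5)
The plan is a direct local case analysis on the neighbourhoods inside $B_i$. Recall the adjacencies: $a_i\sim b_i,e_i,b_{i-1}$; $b_i\sim a_i,c_i,a_{i+1}$; $c_i\sim b_i,d_i,f_i$; $d_i\sim c_i,e_i,h_i$; $e_i\sim a_i,d_i,g_i$; $f_i\sim c_i,g_i,g_{i-1}$; $g_i\sim e_i,f_i,f_{i+1}$; $h_i\sim d_i,h_{i-1},h_{i+1}$. I use the two symmetries (the reflection $a\leftrightarrow f$, $b\leftrightarrow g$, $c\leftrightarrow e$, and the reversal of the block order) to cut the number of cases.

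For (i), suppose $w_i=4$.
\begin{itemize}
\item If $d_i\in D$ but $h_i\notin D$, then the $1$-limited constraint at $d_i$ forces $c_i,e_i\in D$. Since $d_i\in D$, $c_i$ needs one of $b_i,f_i$ and $e_i$ needs one of $a_i,g_i$. This gives $w_i\ge5$, a contradiction.
\item If $h_i\in D$ but $d_i\notin D$, then $d_i$ is dominated by $c_i$ or $e_i$; by the reflection, take $c_i\in D$. Then $b_i,f_i\in D$, which already gives $D\cap V(B_i)=\{b_i,c_i,f_i,h_i\}$. Now $e_i$ has neighbours $a_i,d_i,g_i$, none in $D$, so $e_i$ is undominated, a contradiction.
\end{itemize}

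For (ii), with $d_i,h_i\in D$ and $w_i=4$, the vertex $d_i$ needs one of $c_i,e_i$; by the reflection take $c_i\in D$. Then the fourth vertex is $x\in\{b_i,f_i\}$, and $a_i,e_i$ together with the other one of $b_i,f_i$ and $g_i$ lie outside $D$.
\begin{itemize}
\item If $x=b_i$: since $a_i\notin D$, the constraint at $b_i$ forces $a_{i+1}\in D$. Domination of $g_i$ forces $f_{i+1}\in D$. The constraint at $f_{i+1}$ (with $g_i\notin D$) forces $c_{i+1},g_{i+1}\in D$. The constraint at $a_{i+1}$ requires one of $b_{i+1},e_{i+1}$. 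This is (a).
\item If $x=f_i$: the mirror argument gives $g_{i-1}\in D$. Domination of $a_i$ gives $b_{i-1}\in D$, hence $a_{i-1},c_{i-1}\in D$. The constraint at $g_{i-1}$ gives one of $e_{i-1},f_{i-1}$. This is (b).
\end{itemize}
Applying the reflection to the case $e_i\in D$ yields the same two alternatives, since the reflection preserves the sets in (a) and (b). The ``in particular'' statement is immediate from (a) and (b).

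For (iii), by (i) we have $h_i\notin D$ and $h_{i+1}\in D$. Apply (ii) to block $i+1$. Alternative (b) would put five vertices in $B_i$, contradicting $w_i=4$. So (a) holds, placing $a_{i+2},c_{i+2},f_{i+2},g_{i+2}$ and one of $b_{i+2},e_{i+2}$ in $D$. In addition, $h_{i+1}\in D$ has $h_i\notin D$, so the $1$-limited constraint forces $h_{i+2}\in D$. Hence $w_{i+2}\ge6$. The second statement of (iii) follows by reversing the block order.

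For (iv):
\begin{itemize}
\item If $d_{i+1}\in D$, then by (i) also $h_{i+1}\in D$, and (ii) gives $w_i\ge5$ or $w_{i+2}\ge5$, a contradiction. So $d_{i+1},h_{i+1}\notin D$.
\item If $d_i\in D$, the symmetric form of (iii) applied to the pair $(i,i+1)$ gives $w_{i-1}\ge6$.
\item If $d_{i+2}\in D$, (iii) applied to the pair $(i+1,i+2)$ gives $w_{i+3}\ge6$.
\item Otherwise $d_i,d_{i+1},d_{i+2}\notin D$, so by (i) $h_i,h_{i+1},h_{i+2}\notin D$. Then $h_{i+1}$ is undominated, a contradiction.
\end{itemize}

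The main obstacle is (ii): one must check carefully which vertices are forced outside $D$ once the four vertices of $B_i$ are fixed. One must also verify that the reflection maps alternatives (a) and (b) to themselves, so that the case $e_i\in D$ genuinely needs no separate treatment.
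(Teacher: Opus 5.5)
Your route is the paper's: a local case analysis inside $B_i$, with (iii) and (iv) deduced from (i) and (ii) as in the paper. However, the proof of (i) has a gap. In the case $h_i\in D$, $d_i\notin D$ you assert that $d_i$ is dominated by $c_i$ or $e_i$. This does not follow, because $d_i$ is already dominated by its neighbour $h_i\in D$. So the subcase $c_i,e_i\notin D$ is never excluded. Your (iii) and (iv) rely on exactly the implication $d_i\notin D\Rightarrow h_i\notin D$, so the gap propagates to them.

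The paper closes this before proving (i), by showing that $w_i=4$ forces exactly one of $c_i,e_i$ into $D$. For the missing subcase, suppose $c_i,e_i\notin D$. Then $a_i\in D$ forces $b_i\in D$, since $e_i$ is already an outside neighbour of $a_i$. Conversely $b_i\in D$ forces $a_i\in D$, since $c_i$ is already outside. Likewise $f_i\in D\iff g_i\in D$. Together with $d_i\notin D$ and $h_i\in D$, this makes $w_i$ odd, contradicting $w_i=4$.

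Second, your reason for skipping the case $e_i\in D$ in (ii) is false. The reflection $a\leftrightarrow f$, $b\leftrightarrow g$, $c\leftrightarrow e$ maps $\{a_{i+1},c_{i+1},f_{i+1},g_{i+1}\}$ to $\{a_{i+1},b_{i+1},e_{i+1},f_{i+1}\}$ and $\{b_{i+1},e_{i+1}\}$ to $\{c_{i+1},g_{i+1}\}$, and it moves the sets in (b) similarly. In fact (ii) fails verbatim in that case. Take $k=3q$ and reflect the set obtained by cyclically concatenating copies of $X$. This gives a minimum $1$-limited dominating set with $D\cap V(B_2)=\{a_2,b_2,e_2,f_2,g_2\}$, $D\cap V(B_3)=\{a_3,d_3,e_3,h_3\}$ and $D\cap V(B_4)=\{d_4,e_4,g_4,h_4\}$. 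For $i=3$, neither (a) holds (since $a_4\notin D$) nor (b) (since $c_2\notin D$).

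What your computation actually establishes, like the paper's ``by symmetry'', is (ii) up to this reflection. That weaker form suffices for the ``in particular'' clause, for (iii) and (iv), and for the later lemmas. Each alternative and its mirror image puts five vertices of $B_{i+1}$ or $B_{i-1}$ into $D$, none of them the $h$-vertex. You should state (ii) in that form rather than claim the reflection leaves (a) and (b) invariant.
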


\begin{proof}
Assume that $w_i=4$. If $c_i,e_i\notin D$, then $d_i\notin D$ and
$h_i\in D$. Moreover, $a_i,b_i$ either both belong to $D$ or both lie
outside $D$, and the same holds for $f_i,g_i$. Hence
$w_i\in\{3,5\}$, a contradiction. If $c_i,e_i\in D$, then the
$1$-limited constraint forces at least three additional vertices of
$B_i$ into $D$, so $w_i\ge5$, again a contradiction. Consequently,
exactly one of $c_i,e_i$ belongs to $D$.

\smallskip
\noindent
(i)\,
By symmetry, let $c_i\in D$ and $e_i\notin D$. If $d_i\in D$, then
$h_i\in D$. If $d_i\notin D$, then $b_i,f_i\in D$, and $h_i\in D$
would leave $e_i$ undominated. Hence $h_i\notin D$.

\smallskip
\noindent
(ii)\,
Suppose that $d_i,h_i\in D$ and, by symmetry, that
$c_i\in D$ and $e_i\notin D$. Then exactly one of $b_i,f_i$ belongs
to $D$.
If $b_i\in D$, then
$a_{i+1},f_{i+1},c_{i+1},g_{i+1}\in D$, and at least one of
$b_{i+1},e_{i+1}$ belongs to $D$. Hence (a) holds.
If $f_i\in D$, then $b_{i-1},g_{i-1}\in D$.
The $1$-limited condition at $b_{i-1}$ gives
$a_{i-1},c_{i-1}\in D$, while at least one of
$e_{i-1},f_{i-1}$ must belong to $D$. Hence (b) holds.

\smallskip
\noindent
(iii)\,
Suppose that $w_iw_{i+1}=44$, $d_i\notin D$, and $d_{i+1}\in D$.
By (i), $h_i\notin D$ and $h_{i+1}\in D$. Applying (ii) to
$B_{i+1}$, alternative (b) is impossible, since it would give
$w_i\ge5$. Thus (a) holds, so at least five vertices of $B_{i+2}$
belong to $D$. Since $h_i\notin D$ and $h_{i+1}\in D$, also
$h_{i+2}\in D$, and hence $w_{i+2}\ge6$. The second assertion follows
by symmetry.

\smallskip
\noindent
(iv)\,
Suppose that $w_iw_{i+1}w_{i+2}=444$. By (ii),
$d_{i+1},h_{i+1}$ cannot belong to $D$, and hence both are outside
$D$ by (i). Thus at least one of $h_i,h_{i+2}$ belongs to $D$.
If $h_{i+2}\in D$, then (i) and (iii), applied to
$B_{i+1},B_{i+2}$, give $w_{i+3}\ge6$. If $h_i\in D$, the symmetric
argument gives $w_{i-1}\ge6$.
\end{proof}

We now translate these local restrictions into lower bounds on the
total weight of consecutive blocks. For $i\in[k]$ and $n\ge1$, the
$n$ consecutive blocks $B_i,B_{i+1},\ldots,B_{i+n-1}$
will be called an \emph{$n$-block}, and its weight is
\[
W_i(n)=\sum_{j=0}^{n-1}w_{i+j},
\]
where indices are taken modulo $k$.
We first consider two and three consecutive blocks.

\begin{lemma}\label{8in12}
Let $i\in[k]$. Then
\begin{enumerate}
\item[\rm(i)]
$W_i(2)\ge8$, with equality if and only if
$w_iw_{i+1}=44$.

\item[\rm(ii)]
$W_i(3)\ge12$, with equality if and only if
$w_iw_{i+1}w_{i+2}\in\{363,444\}$.
\end{enumerate}
\end{lemma}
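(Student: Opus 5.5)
The plan is a short case analysis built only on Lemmas~\ref{wi3} and~\ref{weight3}.

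For (i), Lemma~\ref{wi3} gives $w_i,w_{i+1}\ge3$. If one of the two weights equals $3$, then Lemma~\ref{weight3} makes the other at least $6$, so $W_i(2)\ge9$. Otherwise both weights are at least $4$, which gives $W_i(2)\ge8$. Equality forces both weights to be at least $4$ with sum $8$, so $w_iw_{i+1}=44$. Conversely, $44$ clearly has weight $8$.

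For (ii), I split according to whether the middle block $B_{i+1}$ has weight $3$.

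\emph{Case $w_{i+1}=3$.} Lemma~\ref{weight3} gives $w_i,w_{i+2}\ge6$. Hence $W_i(3)\ge15$, and this case cannot give a sum of $12$.

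\emph{Case $w_{i+1}\ge4$.} If $w_i=3$, then Lemma~\ref{weight3} gives $w_{i+1}\ge6$. Together with $w_{i+2}\ge3$ this yields $W_i(3)\ge12$, with equality only when $w_{i+1}=6$ and $w_{i+2}=3$, i.e.\ the pattern $363$. The case $w_{i+2}=3$ is symmetric under block reversal and again gives only $363$ at equality. If instead $w_i,w_{i+2}\ge4$, then all three weights are at least $4$. Thus $W_i(3)\ge12$, with equality exactly for $444$.

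It remains to check that $363$ and $444$ do have weight $12$, which is arithmetic. The "only if" direction is therefore just collecting the equality cases above.

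There is one point I would double-check. Since the statement only concerns block weights and not their realizability, I need the equality characterisation to be "equality implies the pattern is $363$ or $444$" together with "these patterns sum to $12$". No construction is needed. I do not expect a serious obstacle here. The only care needed is that in the $w_i=3$ subcase the bound $w_{i+1}\ge6$ from Lemma~\ref{weight3} is applied to the block adjacent to the weight-$3$ block. Also, when $w_i=w_{i+2}=3$ simultaneously, the pattern is $363$ with $w_{i+1}\ge6$, which is consistent with the above.
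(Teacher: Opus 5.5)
Your proposal is correct and follows essentially the same route as the paper. Both arguments use only Lemmas~\ref{wi3} and~\ref{weight3}, split according to whether the middle or an end block of the window has weight $3$ (or none does), and read off the equality patterns $44$, $363$ and $444$ from the resulting bounds.
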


\begin{proof}
If one of two consecutive blocks has weight $3$, then its neighbor has
weight at least $6$ by Lemma~\ref{weight3}; otherwise both have weight
at least $4$. Hence $W_i(2)\ge8$, with equality precisely for the
pattern $44$.

For three consecutive blocks, if none has weight $3$, their total
weight is at least $12$, with equality precisely for $444$. If the
middle block has weight $3$, Lemma~\ref{weight3} gives total weight at
least $15$. If an end block has weight $3$, its neighbor has weight at
least $6$ and the remaining block at least $3$. Hence the total weight
is at least $12$, with equality precisely for $363$.
\end{proof}

The equality cases in Lemma~\ref{8in12}(ii) are themselves restrictive:
a $3$-block of minimum weight forces additional weight in the
neighboring $3$-blocks.

\begin{lemma}
\label{tight3windows}
Let $k\ge6$ and let $i\in[k]$. Suppose that $W_i(3)=12$. Then
\begin{enumerate}
\item[\rm(i)]
If $w_iw_{i+1}w_{i+2}=363$, then
$W_{i-3}(3)\ge14$ and $W_{i+3}(3)\ge14$.

\item[\rm(ii)]
If $w_iw_{i+1}w_{i+2}=444$, then
$W_{i-3}(3),W_{i+3}(3)\ge13$, and at least one of them is at least
$14$.
\end{enumerate}
\end{lemma}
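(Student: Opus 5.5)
The plan is to combine Lemma~\ref{weight3}, Lemma~\ref{weight4} and Lemma~\ref{8in12} in a short case analysis. Throughout we use the decomposition $W_{j}(3)=w_j+W_{j+1}(2)=W_j(2)+w_{j+2}$, so a single heavy block next to a $2$-block of weight at least $8$ already gives the bounds we need. The assumption $k\ge6$ ensures that the $3$-block $B_i,B_{i+1},B_{i+2}$ and the $3$-blocks $B_{i+3},B_{i+4},B_{i+5}$ and $B_{i-3},B_{i-2},B_{i-1}$ do not overlap. For $k=6$ the last two coincide, which causes no problem.

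\emph{Part (i).} Let $w_iw_{i+1}w_{i+2}=363$.
\begin{itemize}
\item Since $w_i=3$, Lemma~\ref{weight3} gives $w_{i-1}\ge6$.
\item Since $w_{i+2}=3$, the same lemma gives $w_{i+3}\ge6$.
\item By Lemma~\ref{8in12}(i), $W_{i-3}(3)=W_{i-3}(2)+w_{i-1}\ge 8+6=14$.
\item Likewise, $W_{i+3}(3)=w_{i+3}+W_{i+4}(2)\ge 6+8=14$.
\end{itemize}

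\emph{Part (ii), the bound $13$.} Let $w_iw_{i+1}w_{i+2}=444$. We show $W_{i+3}(3)\ge13$; the bound for $W_{i-3}(3)$ follows from the reversal symmetry. By Lemma~\ref{8in12}(ii), $W_{i+3}(3)\ge12$, with equality only for the patterns $363$ and $444$.
\begin{itemize}
\item The pattern $363$ is impossible. It would give $w_{i+3}=3$ next to $w_{i+2}=4$, while Lemma~\ref{weight3} forces $w_{i+2}\ge6$.
\item Suppose the pattern is $444$. Then $w_{i}\cdots w_{i+5}=444444$, so Lemma~\ref{weight4}(iv) applies to the triples starting at $i$, $i+1$ and $i+2$. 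It yields $d_{i+1},h_{i+1},d_{i+2},h_{i+2},d_{i+3},h_{i+3}\notin D$.
\item The neighbors of $h_{i+2}$ are exactly $d_{i+2},h_{i+1},h_{i+3}$, all outside $D$. So $h_{i+2}$ is undominated, a contradiction.
\end{itemize}
Hence $W_{i+3}(3)\ge13$.

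\emph{Part (ii), the bound $14$.} Lemma~\ref{weight4}(iv) applied to the triple at $i$ gives $w_{i-1}\ge6$ or $w_{i+3}\ge6$.
\begin{itemize}
\item If $w_{i+3}\ge6$, then $W_{i+3}(3)=w_{i+3}+W_{i+4}(2)\ge 6+8=14$.
\item If $w_{i-1}\ge6$, then symmetrically $W_{i-3}(3)=W_{i-3}(2)+w_{i-1}\ge14$.
\end{itemize}

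The step I expect to need the most care is excluding the pattern $444$ for the neighboring $3$-block. This is the only place where we go beyond weight counting and use the actual domination structure, namely the undominated vertex $h_{i+2}$. I would also double-check that for $k=6$ the cyclic indexing does not identify any of the blocks $B_{i+1},B_{i+2},B_{i+3}$ used there; it does not, since they are three distinct blocks.
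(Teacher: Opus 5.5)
Your proof is correct and follows the paper's argument: Lemma~\ref{weight3} plus Lemma~\ref{8in12}(i) for part (i), and Lemma~\ref{weight4}(iv) together with the exclusion of $363$ and $444$ for part (ii). The only deviation is in ruling out $444444$. The paper applies the weight conclusion of Lemma~\ref{weight4}(iv) to the triple $w_{i+2}w_{i+3}w_{i+4}$, forcing $w_{i+1}\ge6$ or $w_{i+5}\ge6$, so pure weight counting suffices there; you instead use its vertex conclusion on three overlapping triples to leave $h_{i+2}$ undominated, which is equally valid.
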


\begin{proof}
If $w_iw_{i+1}w_{i+2}=363$, then Lemma~\ref{weight3} gives
$w_{i-1},w_{i+3}\ge6$. Hence, by Lemma~\ref{8in12}(i),
$W_{i-3}(3)\ge14$
and
$W_{i+3}(3)\ge14$.
Suppose now that $w_iw_{i+1}w_{i+2}=444$. By
Lemma~\ref{weight4}(iv), $w_{i-1}\ge6$ or $w_{i+3}\ge6$.
By symmetry, assume that $w_{i-1}\ge6$. Then
$W_{i-3}(3)\ge14$.

It remains to show that $W_{i+3}(3)\ge13$. If
$W_{i+3}(3)=12$, then by Lemma~\ref{8in12}(ii) its pattern is either
$363$ or $444$. The former is impossible by Lemma~\ref{weight3},
while the latter would give the pattern $444444$, contradicting
Lemma~\ref{weight4}(iv) applied to
$w_{i+2}w_{i+3}w_{i+4}=444$.
Thus $W_{i+3}(3)\ge13$. The other case follows by symmetry.
\end{proof}

These local restrictions already force the existence of a relatively
heavy $4$-block.

\begin{lemma}
\label{4blocks}
If $k\ge4$, then there exists $j\in[k]$ such that
$W_j(4)\ge18$.
\end{lemma}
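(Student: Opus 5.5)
We argue by contradiction: assume $W_j(4)\le17$ for every $j\in[k]$. We will locate a block of weight at least $6$, or a run of weight-$4$ blocks, and use the local lemmas to force a heavy $4$-block.

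First we dispose of blocks of weight $3$. If $w_i=3$, Lemma~\ref{weight3} gives $w_{i-1},w_{i+1}\ge6$. The $4$-block $B_{i-1}B_iB_{i+1}B_{i+2}$ then has weight at least $6+3+6+w_{i+2}\ge18$, since $w_{i+2}\ge3$ by Lemma~\ref{wi3}. (For $k=4$ this window wraps around, with $B_{i+2}=B_{i-2}$, but the count is unchanged.) So we may assume $w_i\ge4$ for all $i$.

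Next we handle a block of weight at least $6$. If some $w_i\ge6$, the window $B_iB_{i+1}B_{i+2}B_{i+3}$ has weight at least $6+4+4+4=18$. So we may assume every $w_i\in\{4,5\}$. If moreover $W_j(4)\le17$ for all $j$, every $4$-block contains at least three $4$'s. In particular the sequence contains a $44$ pattern, and in fact a $444$ pattern: four consecutive entries with at most one $5$ and $k\ge4$ always contain three consecutive $4$'s unless the pattern is $4454$ or $4544$. We must therefore treat two cases.

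\emph{Case $444$ occurs.} Lemma~\ref{weight4}(iv) gives $w_{i-1}\ge6$ or $w_{i+3}\ge6$, contradicting $w\le5$. (When $k=4$, $w_{i+3}=w_{i-1}$, and the same conclusion holds.)

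\emph{Case no $444$ occurs.} Every $4$-block then has exactly one $5$, so the weights are periodic of the form $\ldots4454\,4454\ldots$. Here $k$ is divisible by $3$ and the sequence is $445$ repeated. For $k=4$ a single $5$ among four entries already yields $444$ cyclically.

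So consider the pattern $w_iw_{i+1}=44$ flanked by $5$'s. Lemma~\ref{weight4}(i) gives $d_j\in D\iff h_j\in D$ for $j=i,i+1$.
\begin{itemize}
\item If $d_i\notin D$ and $d_{i+1}\in D$, Lemma~\ref{weight4}(iii) gives $w_{i+2}\ge6$, a contradiction. The mirror case $d_i\in D$, $d_{i+1}\notin D$ gives $w_{i-1}\ge6$.
\item If $d_i,d_{i+1}\notin D$, then $h_i,h_{i+1}\notin D$, and $h_i$ must be dominated by $h_{i-1}\in D$.
\item If $d_i,d_{i+1}\in D$, we apply Lemma~\ref{weight4}(ii) to $B_i$. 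Alternative (a) forces at least $5$ vertices of $B_{i+1}$ into $D$, contradicting $w_{i+1}=4$. So (b) holds, and $\{a_{i-1},b_{i-1},c_{i-1},g_{i-1}\}$ together with one of $e_{i-1},f_{i-1}$ lie in $D$. Since $h_{i-1}$ must be dominated, $w_{i-1}=5$ forces it to be dominated from outside. Symmetrically, applying (ii) to $B_{i+1}$ forces structure in $B_{i+2}$, and we expect to derive a contradiction with $w_{i+2}=5$ or with the domination of $d_{i-1}$ or $h_{i-1}$.
\end{itemize}

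The main obstacle is this last configuration, a weight-$5$ block adjacent to a $44$ pattern in the periodic $445$ sequence. It requires a finite case analysis of the possible sets $D\cap V(B_j)$ of size $5$ compatible with the forced vertices. We would first try to show directly that a $5$-block adjacent on both sides to $44$ patterns (the pattern $44544$) is impossible, by tracking the $h$-path and the two triangle-like constraints at $c$ and $e$.
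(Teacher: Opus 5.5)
\textbf{There is a genuine gap, confined to your case ``no $444$ occurs''; the rest matches the paper.} Your handling of a block of weight $3$, of a block of weight at least $6$, and of the case where a pattern $444$ occurs is correct and coincides with the paper's proof.

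\textbf{The second case is vacuous.} Under your standing hypothesis $W_j(4)\le17$ with all $w_i\in\{4,5\}$, every $4$-block contains at most one $5$. Hence the $4$-block starting at any block of weight $5$ is $5444$. If no block has weight $5$, the sequence is constantly $4$. Either way a pattern $444$ exists, so the case cannot arise; this is exactly how the paper finishes. Your claim that the sequence is then ``$445$ repeated'' contradicts your own hypothesis: the window $5445$ has weight $18$, which already proves the lemma. Your displayed form $\ldots4454\,4454\ldots$ even contains $444$. The one-window observation (``unless the pattern is $4454$ or $4544$'') is true but misleading. Sliding the window forward from $4454$ to $4544$ to $5444$ immediately produces $444$.

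\textbf{The structural analysis you propose instead cannot succeed.} You aim to show that a weight-$5$ block flanked by $44$ on both sides, i.e.\ the pattern $44544$, is impossible for a $1$-limited dominating set. That statement is false. For $k=3q$ with $q\ge2$, the cyclic concatenation of $q$ copies of configuration $X$ from Lemma~\ref{upper} has weight sequence $454454\cdots454$, and it contains $44544$. This is also the extremal periodic pattern in Lemma~\ref{total2}. So no contradiction can come from the local domination constraints around $d_i,h_i$ and Lemma~\ref{weight4}(ii); your final bullet is unfinished and heading toward a false target. The only contradiction available is with the counting hypothesis $W_j(4)\le17$. Replace your entire second case by the one-line argument that forces $444$, then apply Lemma~\ref{weight4}(iv), and the proof is complete.
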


\begin{proof}
If $w_i=3$ for some $i$, then Lemma~\ref{weight3} gives
$W_{i-1}(4)\ge6+3+6+3=18$.
Hence assume that every block has weight at least $4$.

If some block has weight at least $6$, the result follows immediately.
Thus suppose that every block has weight $4$ or $5$. If some $4$-block
contains two blocks of weight $5$, its weight is at least $18$.
Otherwise, either all blocks have weight $4$, or every block of weight
$5$ is followed by three blocks of weight $4$. Hence in either case
there are three consecutive blocks of weight $4$. By
Lemma~\ref{weight4}(iv), one of their neighboring blocks has weight at
least $6$, a contradiction.
\end{proof}

The next two lemmas extend these estimates to $5$- and $6$-blocks.

\begin{lemma}
\label{5blocks}
Let $k\ge5$ and let $i\in[k]$. Then $W_i(5)\ge21$.
Moreover, there exists $j\in[k]$ such that
$W_j(5)\ge22$.
\end{lemma}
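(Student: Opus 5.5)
The plan is to obtain the first assertion by splitting a $5$-block into a $2$-block followed by a $3$-block, and then rule out the equality cases. The second assertion follows from Lemma~\ref{4blocks} together with a separate treatment of blocks of weight $3$.

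\emph{Lower bound $W_i(5)\ge21$.} Write $W_i(5)=W_i(2)+W_{i+2}(3)$.
\begin{enumerate}
\item[(1)] By Lemma~\ref{8in12}, $W_i(5)\ge 8+12=20$.
\item[(2)] Suppose equality $W_i(5)=20$ held. Then Lemma~\ref{8in12} forces $w_iw_{i+1}=44$ and $w_{i+2}w_{i+3}w_{i+4}\in\{363,444\}$.
\item[(3)] The pattern $44363$ is impossible. In it $w_{i+2}=3$ while its neighbour $w_{i+1}=4<6$, contradicting Lemma~\ref{weight3}.
\item[(4)] The pattern $44444$ is impossible. The middle triple $w_{i+1}w_{i+2}w_{i+3}=444$ would, by Lemma~\ref{weight4}(iv), force $w_i\ge6$ or $w_{i+4}\ge6$.
\end{enumerate}
Hence $W_i(5)\ge21$. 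For $k=5$ the $5$-block is the whole cycle. The lemmas used are local, with indices taken modulo $k$, so they still apply, since the five blocks are distinct.

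\emph{Existence of a $5$-block of weight at least $22$.} I split according to whether some block has weight $3$.
\begin{enumerate}
\item[(1)] First suppose some block has weight $w_i=3$. Lemma~\ref{weight3} gives $w_{i-1},w_{i+1}\ge6$.
\item[(2)] Consider the $5$-block $B_{i-1},\dots,B_{i+3}$. Its last two blocks form a $2$-block, so Lemma~\ref{8in12}(i) gives $w_{i+2}+w_{i+3}\ge8$.
\item[(3)] Therefore $W_{i-1}(5)\ge 6+3+6+8=23$.
\item[(4)] Otherwise every block has weight at least $4$. Since $k\ge5\ge4$, Lemma~\ref{4blocks} provides $j$ with $W_j(4)\ge18$.
\item[(5)] Appending the next block gives $W_j(5)=W_j(4)+w_{j+4}\ge18+4=22$.
\end{enumerate}

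The argument is essentially bookkeeping. The only point needing care is the equality analysis in the first part: one has to check that both candidate patterns $44363$ and $44444$ are excluded using only Lemmas~\ref{weight3} and~\ref{weight4}. This remains valid when $k=5$, so no wrap-around subtlety arises.
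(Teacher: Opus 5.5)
Your proof is correct and follows essentially the paper's argument. The first part uses the same split $W_i(2)+W_{i+2}(3)\ge 20$ and excludes $44444$ by Lemma~\ref{weight4}(iv); you rule out $44363$ directly via Lemma~\ref{weight3}, where the paper instead uses the second split $W_i(3)+W_{i+3}(2)$. The second part uses the same ingredients (Lemma~\ref{4blocks}, Lemma~\ref{weight3}, and the bound $6+3+6+8=23$), with the case split on a weight-$3$ block made before invoking Lemma~\ref{4blocks} rather than after.
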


\begin{proof}
By Lemma~\ref{8in12},
$W_i(5)=W_i(2)+W_{i+2}(3)\ge20$.
Suppose that equality holds. Then Lemma~\ref{8in12} gives
$W_i(2)=8$, and hence
$w_iw_{i+1}=44$. Using instead
$W_i(5)=W_i(3)+W_{i+3}(2)$ gives
$w_{i+3}w_{i+4}=44$. Hence $w_{i+2}=4$, and the resulting pattern
$44444$ contradicts Lemma~\ref{weight4}(iv). Therefore
$W_i(5)\ge21$.

By Lemma~\ref{4blocks}, choose $j\in[k]$ with $W_j(4)\ge18$.
If $w_{j+4}\ge4$, then $W_j(5)\ge22$. Otherwise $w_{j+4}=3$, and
Lemma~\ref{weight3} gives $w_{j+3},w_{j+5}\ge6$. Hence
$W_{j+3}(3)\ge15$, and Lemma~\ref{8in12}(i) gives
$W_{j+3}(5)
=
W_{j+3}(3)+W_{j+6}(2)
\ge15+8=23$.
Thus some $5$-block has weight at least $22$.
\end{proof}

\begin{lemma}
\label{6blocks}
If $k\ge6$, then $W_i(6)\ge25$ for every $i\in[k]$ and there exists
$j\in[k]$ such that $W_j(6)\ge26$.
\end{lemma}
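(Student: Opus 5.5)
We prove the two assertions separately: first the uniform bound $W_i(6)\ge25$ by splitting the $6$-block into two $3$-blocks, then the existence of a heavier $6$-block from the existence of a heavy $5$-block (Lemma~\ref{5blocks}).

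\emph{First assertion.} We write
\[
W_i(6)=W_i(3)+W_{i+3}(3).
\]
By Lemma~\ref{8in12}(ii) both summands are at least $12$, so $W_i(6)\ge24$.

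Suppose $W_i(6)=24$. Then both $3$-blocks have weight exactly $12$. Apply Lemma~\ref{tight3windows}, which requires $k\ge6$, to the first $3$-block $B_i,B_{i+1},B_{i+2}$. Its right neighbouring $3$-block is $W_{i+3}(3)$. Lemma~\ref{tight3windows} forces $W_{i+3}(3)\ge13$ in every case, a contradiction. Hence $W_i(6)\ge25$ for every $i$.

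\emph{Second assertion.} We pick $j$ with $W_j(5)\ge22$ by Lemma~\ref{5blocks}, and then try to extend this $5$-block by one block on either side:
\[
W_j(6)=W_j(5)+w_{j+5},\qquad W_{j-1}(6)=w_{j-1}+W_j(5).
\]
If $w_{j+5}\ge4$ or $w_{j-1}\ge4$, we obtain a $6$-block of weight at least $26$.

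Otherwise $w_{j-1}=w_{j+5}=3$. (When $k=6$ these are the same block, $j-1\equiv j+5$.) By Lemma~\ref{weight3}, $w_{j+4}\ge6$ and $w_{j+6}\ge6$. Now look at a $6$-block containing the pattern $6\,3\,6$ around $B_{j+5}$, namely $W_{j+3}(6)=w_{j+3}+w_{j+4}+w_{j+5}+w_{j+6}+w_{j+7}+w_{j+8}$. Its central triple $w_{j+4}w_{j+5}w_{j+6}$ contributes at least $15$. The remaining blocks are bounded as follows:
\begin{itemize}
\item $w_{j+3}\ge3$ by Lemma~\ref{wi3};
\item $w_{j+7}+w_{j+8}\ge8$ by Lemma~\ref{8in12}(i).
\end{itemize}
This gives $W_{j+3}(6)\ge 26$. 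For small $k$ the indices wrap around, but every bound used is local, so the estimate is still valid.

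A cleaner way to organise the whole second assertion is to split on whether some block has weight $3$:
\begin{itemize}
\item If some $w_i=3$, then $W_{i-1}(6)=w_{i-1}+w_i+w_{i+1}+W_{i+2}(3)\ge 6+3+6+12=27$.
\item Otherwise every $w_i\ge4$, and then $W_j(5)\ge22$ together with $w_{j+5}\ge4$ gives $W_j(6)\ge26$.
\end{itemize}

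The main obstacle is the equality case $W_i(6)=24$ in the first assertion. There one must check that Lemma~\ref{tight3windows} really applies to the neighbouring $3$-block in the needed direction, which is handled by its symmetric statements.
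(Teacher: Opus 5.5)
Your proof is correct. The first assertion is handled exactly as in the paper: split $W_i(6)=W_i(3)+W_{i+3}(3)$ and use Lemma~\ref{tight3windows} to rule out two consecutive $3$-blocks of weight $12$.

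For the second assertion you take a different route.

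\begin{itemize}
\item \textbf{The paper} stays in the $3$-block framework. Either every $3$-block has weight at least $13$, so every $6$-block has weight at least $26$. Or some $W_j(3)=12$, and then the asymmetric clause of Lemma~\ref{tight3windows} (one neighbouring $3$-block has weight at least $14$) gives $W_{j-3}(6)\ge26$ or $W_j(6)\ge26$.
\item \textbf{You} split on whether a block of weight $3$ exists. If $w_i=3$, Lemma~\ref{weight3} together with $W_{i+2}(3)\ge12$ gives a $6$-block of weight at least $27$. If no such block exists, you extend the heavy $5$-block from Lemma~\ref{5blocks} by a block of weight at least $4$.
\end{itemize}

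Both arguments are equally short and ultimately rest on Lemma~\ref{weight4}(iv). The paper's version uses only Lemma~\ref{tight3windows}, which keeps the two halves of the proof parallel. Yours avoids the ``at least one is at least $14$'' clause but imports Lemma~\ref{5blocks}, and hence Lemma~\ref{4blocks}.

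Your first version of the second assertion, which extends the $5$-block and falls back on the pattern $636$, is also valid. Once you have the cleaner case split, though, it is redundant and could be dropped.
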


\begin{proof}
Suppose that $W_i(6)\le24$ for some $i\in[k]$. Since
$W_i(6)=W_i(3)+W_{i+3}(3)$ and both summands are at least $12$, we have
$W_i(3)=W_{i+3}(3)=12$. However, Lemma~\ref{tight3windows}, applied to
$W_i(3)=12$, gives $W_{i+3}(3)\ge13$, a contradiction.

If every $3$-block has weight at least $13$, then every $6$-block has
weight at least $26$. Otherwise $W_j(3)=12$ for some $j$. By
Lemma~\ref{tight3windows}, one of
$W_{j-3}(3)$ and $W_{j+3}(3)$ is at least $14$, and hence
$W_{j-3}(6)\ge26$ or $W_j(6)\ge26$.
\end{proof}

We will need one further local restriction on blocks of weight $5$.
It shows that a block containing $h_i$ cannot have both neighboring
blocks of weight $4$.

\begin{lemma}
\label{w=5}
Let $i\in[k]$. If $w_i=5$ and $h_i\in D$, then
$w_{i-1}\ge5$ or $w_{i+1}\ge5$.
\end{lemma}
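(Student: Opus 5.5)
Throughout we argue by contradiction: assume $w_i=5$, $h_i\in D$, and $w_{i-1},w_{i+1}\le4$. If $w_{i-1}=3$ or $w_{i+1}=3$, then Lemma~\ref{weight3} gives $w_i\ge6$, which is impossible. So $w_{i-1}=w_{i+1}=4$, and Lemma~\ref{weight4} applies to both neighbouring blocks. Since $h_i\in D$ has the three neighbours $d_i,h_{i-1},h_{i+1}$, the $1$-limited constraint puts at least two of them in $D$. By Lemma~\ref{weight4}(i), whenever $h_{i\pm1}\in D$ we also have $d_{i\pm1}\in D$, so Lemma~\ref{weight4}(ii) becomes available for that neighbour. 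This leaves two cases: (A) $h_{i-1},h_{i+1}\in D$; (B) $d_i\in D$ and exactly one of $h_{i-1},h_{i+1}$ in $D$, say $h_{i+1}$ by the reversal symmetry.

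The main tool is a refinement of Lemma~\ref{weight4}(ii), taken from its proof. Suppose a block $B_j$ has $w_j=4$ and $d_j,h_j\in D$. Up to the symmetry $a\leftrightarrow f$, $b\leftrightarrow g$, $c\leftrightarrow e$, we have $D\cap V(B_j)=\{c_j,d_j,h_j,x\}$ with $x\in\{b_j,f_j\}$.
\begin{itemize}
\item If $x=f_j$, then $g_{j-1},b_{j-1},a_{j-1},c_{j-1}\in D$, so the left neighbour carries at least $4$ specified vertices besides $h_{j-1}$.
\item If $x=b_j$, then $a_j\notin D$ and $e_j\notin D$, so $a_j$ must be dominated by $b_{j-1}$. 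Since $b_{j-1}$ then has the neighbour $a_j$ outside $D$, the $1$-limited constraint forces $a_{j-1},c_{j-1}\in D$ as well.
\end{itemize}
I expect an analogous small list of forced vertices in $B_{j-1}$ for each of the four orientations (including the $e_j$-versions). Writing this list out is the first step.

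In case (A), I apply this list to $B_{i+1}$ (left neighbour $B_i$) and, via reversal, to $B_{i-1}$ (right neighbour $B_i$). Each application forces a set of vertices of $B_i$ into $D$, typically $a_i,c_i$ or $b_i,g_i,\dots$, in addition to $h_i$. I will then check, orientation by orientation, that the union of the forced sets together with $h_i$ has size at least $6$, or else violates domination or the $1$-limited constraint at $c_i$, $e_i$ or $d_i$. An example of the latter: $d_i\notin D$ in case (A) would need domination by $c_i$ or $e_i$, and each of those drags in two more vertices.

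In case (B), we have $d_i,h_i\in D$ and $h_{i-1}\notin D$. Hence $d_i$ needs a second $D$-neighbour among $c_i,e_i$, and $h_i$'s only outside neighbour is $h_{i-1}$. Applying the refined list to $B_{i+1}$ adds at least two forced vertices of $B_i$. I would then verify that the five vertices $d_i,h_i$, the chosen vertex of $\{c_i,e_i\}$ and the two forced vertices either exceed five or leave some vertex of $B_i$ (for instance $e_i$, $f_i$ or $g_i$) undominated or over-limited.

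The main obstacle is the bookkeeping of the orientations: four orientations for each weight-$4$ neighbour, combined with the choice in $B_i$. I would first try to reduce everything to one statement: \emph{a weight-$4$ block $B_j$ with $d_j,h_j\in D$ forces at least two vertices of $B_{j-1}$ (or of $B_{j+1}$) other than $h$ and $d$ into $D$, and these lie in a triangle-free pattern that cannot coexist with a second such forcing.} Case (A) would then follow because the two forcings, one from each side, already give $1+2+2=5$ vertices, and I would show that the required domination of $d_i$ adds a sixth. If this uniform statement fails, I fall back on the explicit case check, which is finite and small.
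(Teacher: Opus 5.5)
There is a genuine gap, and it is in your main tool. The second bullet is false: $a_jb_j$ is an edge of the block. So when $D\cap V(B_j)=\{b_j,c_j,d_j,h_j\}$, the vertex $a_j$ is already dominated by $b_j$, and nothing is forced at $b_{j-1}$. What actually happens is this. Since $b_j$ has the outside neighbour $a_j$, we get $a_{j+1}\in D$. The vertex $g_j$, with $e_j,f_j\notin D$, forces $f_{j+1}\in D$ and hence $c_{j+1},g_{j+1}\in D$. The weight is pushed into the \emph{right} neighbour, and the same holds for the mirror orientation $\{d_j,e_j,g_j,h_j\}$.

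This is exactly the dichotomy (a)/(b) of Lemma~\ref{weight4}(ii): two orientations force into $B_{j-1}$ and two force into $B_{j+1}$. So there is no ``analogous list in $B_{j-1}$'' for all four orientations. Consequently both of your case arguments rest on forcings that need not occur:
\begin{itemize}
\item In case (A), a neighbour in an orientation pointing away from $B_i$ forces nothing in $B_i$, so the count $1+2+2=5$ (plus a sixth) has no basis.
\item In case (B), the step ``applying the refined list to $B_{i+1}$ adds at least two forced vertices of $B_i$'' fails for the same reason.
\end{itemize}
Your weakened uniform statement, with ``$B_{j-1}$ (or $B_{j+1}$)'', is true. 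However, it cannot produce forcings into $B_i$ from both sides, and an unspecified fallback case check does not replace a correct reduction.

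The missing idea is to use the dichotomy in the opposite direction. If a weight-$4$ neighbour with $d,h\in D$ forces toward $B_i$, it puts five vertices of $B_i$ other than $h_i$ into $D$, so $w_i\ge6$, a contradiction. Hence such a neighbour must force away from $B_i$. In those orientations $a_{i+1},f_{i+1}\notin D$ (resp.\ $b_{i-1},g_{i-1}\notin D$), and this is the information to exploit.
\begin{itemize}
\item In case (A), it gives $a_i\in D\Leftrightarrow b_i\in D$ and $f_i\in D\Leftrightarrow g_i\in D$. Each pair drags in both $c_i$ and $e_i$, and then the constraint fails at $c_i$ or $e_i$. So $D\cap V(B_i)\subseteq\{c_i,d_i,e_i,h_i\}$ and $w_i\le4$.
\item In case (B), suppose $c_i\in D$ and $e_i\notin D$. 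Then $g_i\notin D$, hence $f_i\in D$. The fifth vertex must be $a_i$, since $b_i$ would also require $a_i$. But then $a_i$ has two outside neighbours. The $e_i$ case is symmetric.
\end{itemize}
The paper instead splits on $d_i\notin D$ versus $d_i\in D$. It argues directly from which boundary vertices of $B_{i\pm1}$ can lie in $D$.
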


\begin{proof}
Suppose, to the contrary, that $w_{i-1},w_{i+1}\le4$. By
Lemmas~\ref{wi3} and~\ref{weight3},
$w_{i-1}=w_{i+1}=4$.

Suppose first that $d_i\notin D$. Since $h_i\in D$, we have
$h_{i-1},h_{i+1}\in D$, and hence
$d_{i-1},d_{i+1}\in D$ by Lemma~\ref{weight4}(i).
Since $w_{i-1}=w_{i+1}=4$, neither
$\{b_{i-1},g_{i-1}\}$ nor $\{a_{i+1},f_{i+1}\}$ can be contained in
$D$, as either would force the corresponding weight to at least $5$.
Also, $c_i,e_i$ cannot both belong to $D$, as this would give
$w_i\ge7$. 

If neither
belongs to $D$, then
$D\cap V(B_i)=\{a_i,b_i,f_i,g_i,h_i\}$,
which forces $b_{i-1},g_{i-1}\in D$, a contradiction. Thus, by
symmetry, assume that $c_i\in D$ and $e_i\notin D$. Then
$b_i,f_i\in D$, and exactly one of $a_i,g_i$ belongs to $D$.
If $a_i\in D$, then $b_{i-1},g_{i-1}\in D$, while if
$g_i\in D$, then $a_{i+1},f_{i+1}\in D$, again a contradiction.
Hence $d_i\in D$.
Now we have $d_i,h_i\in D$, thus  exactly one of $c_i,e_i$ belongs to $D$. By
symmetry, assume that $c_i\in D$ and $e_i\notin D$.

Suppose first that $f_i\notin D$. Then $g_i\notin D$, and hence
$D\cap V(B_i)=\{a_i,b_i,c_i,d_i,h_i\}$.
It follows that $f_{i+1},c_{i+1},g_{i+1}\in D$. Since
$w_{i+1}=4$, Lemma~\ref{weight4}(i) gives
$d_{i+1},h_{i+1}\notin D$. Hence
$h_{i-1},d_{i-1},b_{i-1}\in D$. The fourth vertex of
$D\cap V(B_{i-1})$ must be $c_{i-1}$, since $e_{i-1}\in D$ would
violate the $1$-limited constraint at $e_{i-1}$. But then
$g_{i-1}$ is undominated, a contradiction. Thus  $f_i\in D$.

Suppose next that $b_i\notin D$. Then $D\cap V(B_i)=\{c_i,d_i,f_i,g_i,h_i\}$.
It follows that $b_{i-1},a_{i-1},c_{i-1}\in D$. Since
$w_{i-1}=4$, Lemma~\ref{weight4}(i) gives
$d_{i-1},h_{i-1}\notin D$. Hence
$h_{i+1},d_{i+1}\in D$. As $g_i$ has $e_i\notin D$ as a neighbor, we also have $f_{i+1}\in D$, and 
the fourth vertex of
$D\cap V(B_{i+1})$ must be $c_{i+1}$. But then
$a_{i+1}$ is undominated, a contradiction.

Therefore, we have
$D\cap V(B_i)=\{b_i,c_i,d_i,f_i,h_i\}$.
Hence $a_{i+1},g_{i-1}\in D$. Since $h_i,d_i\in D$, at least one of
$h_{i-1},h_{i+1}$ belongs to $D$.

If $h_{i+1}\in D$, then $d_{i+1}\in D$. Since $w_{i+1}=4$, exactly
one of $c_{i+1},e_{i+1}$ belongs to $D$. If $c_{i+1}\in D$, it has
two neighbors outside $D$, while if $e_{i+1}\in D$, the vertex
$f_{i+1}$ is undominated. Thus $h_{i+1}\notin D$.
Hence $h_{i-1},d_{i-1}\in D$. Again, exactly one of
$c_{i-1},e_{i-1}$ belongs to $D$. If $c_{i-1}\in D$, it has two
neighbors outside $D$, while if $e_{i-1}\in D$, the vertex
$b_{i-1}$ is undominated. This final contradiction completes the
proof.
\end{proof}

We can now characterize the $6$-blocks attaining the minimum weight $25$
from Lemma~\ref{6blocks}. Such a block has a very restricted weight
pattern and forces additional weight on both sides.

\begin{lemma}
\label{sixblocks25}
Let $k\ge7$. If $W_i(6)=25$, then
$w_iw_{i+1}w_{i+2}w_{i+3}w_{i+4}w_{i+5}
\in\{444463,364444\}$.
Moreover, $w_{i-1}\ge6$, $w_{i+6}\ge6$ and
$W_{i-6}(6)\ge27$, $W_{i+6}(6)\ge27$.
\end{lemma}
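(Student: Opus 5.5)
The plan is to split $W_i(6)=W_i(3)+W_{i+3}(3)$ and use the rigidity of minimum-weight $3$-blocks. By Lemma~\ref{8in12}(ii) both summands are at least $12$, so one of them equals $12$ and the other equals $13$. By the reversal symmetry of $G_k$, which maps the pattern $444463$ to $364444$, I may assume that $W_i(3)=12$ and $W_{i+3}(3)=13$.

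\textbf{Pinning down the first three blocks.} By Lemma~\ref{8in12}(ii), $w_iw_{i+1}w_{i+2}\in\{363,444\}$. The pattern $363$ is excluded, because Lemma~\ref{tight3windows}(i) would give $W_{i+3}(3)\ge14$. Hence $w_iw_{i+1}w_{i+2}=444$. Lemma~\ref{tight3windows}(ii) then says one of $W_{i-3}(3),W_{i+3}(3)$ is at least $14$, and it must be $W_{i-3}(3)$.

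\textbf{Listing the candidate weight-$13$ patterns.} I classify $w_{i+3}w_{i+4}w_{i+5}$ with sum $13$, recalling that $w_{i+2}=4$.
\begin{itemize}
\item $w_{i+3}=3$ is impossible, since Lemma~\ref{weight3} would force $w_{i+2}\ge6$.
\item $w_{i+4}=3$ is impossible, since both neighbours would be at least $6$ and the sum at least $15$.
\item $w_{i+5}=3$ forces $w_{i+4}\ge6$ and $w_{i+3}\ge4$. With sum $13$ this gives exactly $463$.
\item If no entry is $3$, the pattern is one of $445,454,544$.
\end{itemize}
Patterns starting with $4$ create $4444$ at positions $i,\dots,i+3$. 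Lemma~\ref{weight4}(iv), applied to $w_{i+1}w_{i+2}w_{i+3}=444$, then needs $w_i\ge6$ or $w_{i+4}\ge6$, which fails. So the only remaining alternative to $463$ is $444544$.

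\textbf{Excluding $444544$ (the main obstacle).} I expect this to need vertex-level information rather than weights alone. Apply Lemma~\ref{weight4}(iv) to $444$ at $i,i+1,i+2$: this gives $d_{i+1},h_{i+1}\notin D$. The proof of (iv), via Lemma~\ref{weight4}(iii), shows that $h_{i+2}\in D$ would force $w_{i+3}\ge6$. Since $w_{i+3}=5$, we get $h_{i+2}\notin D$, and then $d_{i+2}\notin D$ by Lemma~\ref{weight4}(i).

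Next, if $h_{i+3}\in D$, Lemma~\ref{w=5} would give $w_{i+2}\ge5$ or $w_{i+4}\ge5$, which is false. Hence $h_{i+3}\notin D$. Now $h_{i+2}$ has all three neighbours $d_{i+2},h_{i+1},h_{i+3}$ outside $D$ and is itself outside $D$, so it is undominated, a contradiction. Therefore the pattern is $444463$.

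\textbf{The ``moreover'' part.}
\begin{itemize}
\item Since $w_{i+5}=3$, Lemma~\ref{weight3} gives $w_{i+6}\ge6$.
\item Lemma~\ref{weight4}(iv), applied to $w_iw_{i+1}w_{i+2}=444$, gives $w_{i-1}\ge6$ or $w_{i+3}\ge6$. As $w_{i+3}=4$, we get $w_{i-1}\ge6$.
\item By Lemma~\ref{5blocks} (valid since $k\ge7$), $W_{i+6}(6)=w_{i+6}+W_{i+7}(5)\ge6+21=27$.
\item Likewise $W_{i-6}(6)=W_{i-6}(5)+w_{i-1}\ge21+6=27$.
\end{itemize}
The case $364444$ follows by the reversal symmetry.
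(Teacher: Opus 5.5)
Your proof is correct and follows essentially the paper's route. Both arguments reduce to $W_i(3)=12$ with pattern $444$, exclude $w_{i+3}=5$ using Lemma~\ref{w=5} together with domination of $h_{i+2}$, and obtain the ``moreover'' part from Lemmas~\ref{weight3}, \ref{weight4}(iv) and~\ref{5blocks}. The one difference is minor: you enumerate the weight-$13$ triples and remove $445$ and $454$ by applying Lemma~\ref{weight4}(iv) to $w_{i+1}w_{i+2}w_{i+3}=444$, which is slightly more direct than the paper's use of Lemma~\ref{weight4}(ii) to force $w_{i+4}\ge6$.
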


\begin{proof}
Since $W_i(6)=W_i(3)+W_{i+3}(3)=25$, one of the summands equals $12$. By symmetry, suppose that $W_i(3)=12$. Hence
$w_iw_{i+1}w_{i+2}\in\{363,444\}$. The pattern $363$ is impossible as Lemma \ref{tight3windows} guarantees that in this case $W_{i+3}(3)\ge14$. Therefore
$w_iw_{i+1}w_{i+2}=444$.

Since $W_{i+3}(3)=13$ and $W_{i+4}(2)\ge8$, we have
$w_{i+3}\le5$. By Lemma~\ref{weight4}(iv),
$d_{i+1},h_{i+1}\notin D$. If $d_{i+2}\in D$, then
Lemma~\ref{weight4}(iii) gives $w_{i+3}\ge6$, a contradiction. Hence
$d_{i+2},h_{i+2}\notin D$. Since $h_{i+2}$ must be dominated,
$h_{i+3}\in D$, and the $1$-limited constraint gives
$d_{i+3},h_{i+4}\in D$.

Now $w_{i+3}\neq3$ by Lemma~\ref{weight3}. If $w_{i+3}=5$, then
$w_{i+4}=w_{i+5}=4$, contradicting Lemma~\ref{w=5}. Thus
$w_{i+3}=4$. Applying Lemma~\ref{weight4}(ii) to $B_{i+3}$, its alternative
concerning $B_{i+2}$ is impossible since $w_{i+2}=4$. The other
alternative forces five vertices of $B_{i+4}$ into $D$, none of which
is $h_{i+4}$. Since $h_{i+4}\in D$, we obtain $w_{i+4}\ge6$.
As $W_{i+3}(3)=13$, it follows that
$w_{i+4}=6$ and $w_{i+5}=3$. Thus the pattern is $444463$.
The symmetric case gives $364444$.

In either case, Lemmas~\ref{weight3} and~\ref{weight4}(iv) give
$w_{i-1},w_{i+6}\ge6$. Hence, by Lemma~\ref{5blocks},
$W_{i-6}(6)\ge21+6=27$
and 
$W_{i+6}(6)\ge6+21=27$.
\end{proof}

The compensation becomes stronger when two $6$-blocks of weight $25$
occur with one $6$-block between them.

\begin{lemma}
\label{sixblockcompensation}
Let $k\ge18$. If $W_i(6)=W_{i+12}(6)=25$, then
$W_{i+6}(6)\ge28$.
\end{lemma}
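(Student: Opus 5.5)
By Lemma~\ref{sixblocks25}, each of the two $6$-blocks of weight $25$ has pattern $444463$ or $364444$. In either orientation, the blocks just outside the $6$-block have weight at least $6$. So $w_{i+6}\ge6$ (right neighbour of the first $6$-block) and $w_{i+11}\ge6$ (left neighbour of the second $6$-block). The middle $6$-block $B_{i+6},\dots,B_{i+11}$ therefore has weight at least $6$ at both ends.

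The basic estimate is
\[
W_{i+6}(6)=w_{i+6}+W_{i+7}(4)+w_{i+11}.
\]
To get $28$ it suffices to show $W_{i+7}(4)\ge16$, or to show that one of the end blocks is heavier when this fails. Lemma~\ref{8in12}(i) gives $W_{i+7}(4)=W_{i+7}(2)+W_{i+9}(2)\ge16$ directly, so $W_{i+6}(6)\ge 6+16+6=28$.

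I would double-check this against the case where a weight-$3$ block sits inside. Say $w_{i+7}=3$. Then $w_{i+8}\ge6$ by Lemma~\ref{weight3}, and $w_{i+7}+w_{i+8}\ge9$. Say $w_{i+8}=3$. Then $w_{i+7},w_{i+9}\ge6$, and the pair sums are still at least $9$. The bound of Lemma~\ref{8in12}(i) applies to every pair of consecutive blocks, so it is unconditional.

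Hence the argument is short: invoke Lemma~\ref{sixblocks25} for both $6$-blocks to get $w_{i+6},w_{i+11}\ge6$, then apply Lemma~\ref{8in12}(i) twice to the inner four blocks. The hypothesis $k\ge18$ makes the three $6$-blocks $B_i,\dots,B_{i+17}$ pairwise disjoint modulo $k$, so the indices are all distinct and the lemmas, which need $k\ge7$, apply.

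The main point to verify carefully is that Lemma~\ref{sixblocks25} really gives $w_{i+6}\ge6$ for the first $6$-block in both orientations. For $444463$ the next block follows a $3$, so Lemma~\ref{weight3} applies. For $364444$ it follows a $444$, and Lemma~\ref{weight4}(iv) gives $w_{i-1}\ge6$ or $w_{i+6}\ge6$; here $w_{i-1}=6$ is the block before the $3$ in position $i$. That disjunction is the potential gap. If the lemma's stated conclusion $w_{i+6}\ge6$ is granted as written, there is nothing more to do. Otherwise I would redo the argument of Lemma~\ref{weight4}(iii)/(iv) for $444$ preceded by $6$: use the $h$-vertex analysis from the proof of Lemma~\ref{sixblocks25}, with the roles of the two sides swapped, to force $h$ into $D$ at the far end and hence $w_{i+6}\ge6$.
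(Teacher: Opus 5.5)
Your argument is correct and is exactly the paper's proof: Lemma~\ref{sixblocks25} gives $w_{i+6}\ge6$ and $w_{i+11}\ge6$, and Lemma~\ref{8in12}(i), applied to $B_{i+7},B_{i+8}$ and to $B_{i+9},B_{i+10}$, gives $W_{i+7}(4)\ge16$, so the middle $6$-block has weight at least $28$. The gap you worry about in your last paragraph does not exist, and no extra $h$-vertex analysis is needed. For the pattern $364444$, apply Lemma~\ref{weight4}(iv) to the final triple $w_{i+3}w_{i+4}w_{i+5}=444$ rather than to an earlier one: its left neighbour has $w_{i+2}=4$, so the lemma's other alternative fails and it forces $w_{i+6}\ge6$.
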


\begin{proof}
By Lemma~\ref{sixblocks25}, the equality $W_i(6)=25$ gives
$w_{i+6}\ge6$, while $W_{i+12}(6)=25$ gives
$w_{i+11}\ge6$. Moreover, by Lemma~\ref{8in12}(i),
$W_{i+7}(4)
=
W_{i+7}(2)+W_{i+9}(2)
\ge8+8
=
16$.
Consequently,
$W_{i+6}(6)
=
w_{i+6}+W_{i+7}(4)+w_{i+11}
\ge6+16+6
=
28$.
\end{proof}

The preceding compensation results imply that the average weight of
any sequence of consecutive $6$-blocks is at least $26$.

\begin{lemma}
\label{vsota6}
Let $n\ge2$ and $k\ge6n$. Then
$W_i(6n)\ge26n$
for every $i\in[k]$.
\end{lemma}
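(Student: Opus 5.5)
We split the $6n$-block starting at $B_i$ into the $n$ consecutive $6$-blocks with weights $x_t=W_{i+6(t-1)}(6)$ for $t\in[n]$, so that $W_i(6n)=\sum_{t=1}^n x_t$. By Lemma~\ref{6blocks} every $x_t\ge25$, so it suffices to show that the number of indices with $x_t=25$ (the \emph{deficient} $6$-blocks) is at most the total excess $\sum_t (x_t-26)^+$ coming from the remaining ones. The plan is a discharging (charging) argument: each deficient $6$-block sends a deficit of $1$ to a neighbouring $6$-block in the sequence that has surplus.

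The key inputs are Lemma~\ref{sixblocks25} and Lemma~\ref{sixblockcompensation}. For $k\ge7$ (which holds since $k\ge6n\ge12$), Lemma~\ref{sixblocks25} says that if $x_t=25$ then both adjacent $6$-blocks $W_{i+6(t-2)}(6)$ and $W_{i+6t}(6)$ have weight at least $27$, i.e.\ surplus at least $1$. In particular, two deficient $6$-blocks are never consecutive in the sequence. Lemma~\ref{sixblockcompensation} (valid when $k\ge18$) says that if $x_{t-1}=x_{t+1}=25$, then $x_t\ge28$, so a $6$-block squeezed between two deficient ones has surplus at least $2$. The charging rule is then: a deficient $x_t$ with $t<n$ sends its unit of deficit to $x_{t+1}$, and if $t=n$ it sends it to $x_{n-1}$ (which exists because $n\ge2$). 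A $6$-block $x_s$ can receive charge only from $x_{s-1}$ and, when $s=n-1$, also from $x_n$. It receives two units only if both neighbours are deficient, and in that case Lemma~\ref{sixblockcompensation} gives surplus at least $2$. If it receives one unit, Lemma~\ref{sixblocks25} gives surplus at least $1$. Summing, $\sum_t x_t\ge 26n$.

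The main obstacle is the small-$k$ regime in which Lemma~\ref{sixblockcompensation} does not apply, namely $k<18$, i.e.\ $n=2$ with $12\le k<18$. For $n=2$ the configuration with two deficient blocks cannot occur: $x_1=x_2=25$ would be consecutive deficient $6$-blocks, which Lemma~\ref{sixblocks25} forbids. Hence for $n=2$ only the single-receipt case arises, and the argument needs only Lemma~\ref{sixblocks25}. For $n\ge3$ we have $k\ge18$, so Lemma~\ref{sixblockcompensation} is available. I would also double-check that the $6$-blocks involved are genuinely distinct, which requires $6n\le k$. Lemma~\ref{sixblockcompensation} refers to $W_i(6)$ and $W_{i+12}(6)$, which are disjoint from the middle block $W_{i+6}(6)$ precisely when $k\ge18$, and this is guaranteed by our hypothesis. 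Wrap-around indices modulo $k$ only matter when $6n=k$, and then the lemmas still apply verbatim since they are stated cyclically.
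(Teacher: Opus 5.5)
Your proposal is correct and follows essentially the same argument as the paper. Both split $W_i(6n)$ into $n$ disjoint $6$-blocks and charge each block of weight $25$ to an adjacent block. A block receiving one unit is covered by Lemma~\ref{sixblocks25}, and a block receiving two units is covered by Lemma~\ref{sixblockcompensation}, which is available because that case forces $n\ge3$ and hence $k\ge18$. Your explicit rule (charge forward, with the last block charging backward) simply makes concrete the paper's instruction to associate each such block with one adjacent block.
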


\begin{proof}
Write
\[
W_i(6n)=\sum_{j=0}^{n-1}W_{i+6j}(6).
\]
Since $6n\le k$, the $n$ $6$-blocks in this decomposition are pairwise
disjoint. We show that their average weight is at least $26$.

By Lemma~\ref{sixblocks25}, every $6$-block of weight $25$ has each
adjacent $6$-block in the decomposition of weight at least $27$.
Associate each $6$-block of weight $25$ with one such adjacent block.
A block can be associated with at most two blocks of weight $25$. If
it is associated with two, then $n\ge3$ and it lies between them, so
Lemma~\ref{sixblockcompensation} gives weight at least $28$.

Thus a block associated with one block of weight $25$ contributes a
surplus of at least one above $26$, while one associated with two
contributes a surplus of at least two. By Lemma~\ref{6blocks}, the only
possible deficit below $26$ is one, coming from a block of weight $25$.
Hence the total surplus compensates the total deficit, and therefore
$W_i(6n)\ge26n$.
\end{proof}

We next consider the case in which every $2$-block has weight at most
$9$. The local restrictions then force a periodic weight pattern.

\begin{lemma}
\label{total2}
Let $k\ge6$. If $W_i(2)\le9$ for every $i\in[k]$, then
\[
\gammaenaL(G_k)\geq 
\begin{cases}
\frac{13}{3}k; & k\equiv0\pmod3\\
\frac{13}{3}k+\frac23; & k\equiv1\pmod3\\
\frac{13}{3}k+\frac43; & k\equiv2\pmod3.
\end{cases}
\]
\end{lemma}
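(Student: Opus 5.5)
We work with the weight sequence of the minimum $1$-limited dominating set $D$ under the assumption $W_i(2)\le9$ for all $i$. First we pin down which weights can occur. If some $w_i=3$, Lemma~\ref{weight3} gives $w_{i+1}\ge6$, so $W_i(2)\ge9$, forcing $w_{i+1}=6$. Then $W_{i+1}(2)\le9$ gives $w_{i+2}\le3$, hence $w_{i+2}=3$. Iterating, the whole cyclic sequence is $3636\cdots$. If no block has weight $3$, every weight lies in $\{4,5\}$, because a weight $\ge6$ next to a weight $\ge4$ gives a $2$-block of weight $\ge10$. Moreover, two blocks of weight $5$ cannot be adjacent. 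So there are two regimes: the alternating pattern $(36)^{k/2}$ (with $k$ even), or a sequence over $\{4,5\}$ with no two consecutive $5$'s.

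In the first regime, $|D|=\tfrac92k$. We have $\tfrac92k\ge\tfrac{13}{3}k+\tfrac43$ as soon as $k\ge8$. For $k=6$ (which is $\equiv0 \pmod 3$), $27\ge26$ holds. So the bound follows in every residue class.

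In the second regime, Lemma~\ref{weight4}(iv) forbids $444$: such a triple would force a neighbouring block of weight at least $6$. So between consecutive $5$'s there are one or two $4$'s, and the sequence is a cyclic concatenation of the words $54$ and $544$ (the all-$4$ sequence is already excluded). Let $p$ be the number of words $54$ and $q$ the number of words $544$. Then $k=2p+3q$ and $|D|=9p+13q=\tfrac{13}{3}k+\tfrac{p}{3}$.

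It remains to bound $p$ from below by residue: $p\equiv k/2\cdot\ldots$, more precisely $2p\equiv k\pmod 3$. This gives $p\equiv0,2,1\pmod3$ for $k\equiv0,1,2\pmod3$, and hence $p\ge0,2,1$ respectively. These values yield exactly $\tfrac{13}{3}k$, $\tfrac{13}{3}k+\tfrac23$ and $\tfrac{13}{3}k+\tfrac13$. The last case falls short of the claimed $\tfrac43$, so the main obstacle is to exclude small $p$ in that case.

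For $k\equiv2\pmod3$ I would show that $p=1$ is impossible; then $p\ge4$ and the bound $\tfrac{13}{3}k+\tfrac43$ follows. When $p=1$, the sequence contains the pattern $4454544$ together with many $544$ words. I expect that Lemma~\ref{w=5} and Lemma~\ref{weight4}(i)--(iii) constrain the positions of the $d_i,h_i$ vertices along each $544$ word. In particular, in a $44$ pair exactly one block contains $d,h$ by (iii)-type arguments, and each $5$-block then has $h_i\notin D$ by Lemma~\ref{w=5}. Tracking this $h$-parity around the cycle, a single $54$ word should break the consistency. If that parity argument fails, the fallback is a direct local analysis of the pattern $4454$ showing that it cannot occur, mirroring the case analysis in Lemma~\ref{w=5}.
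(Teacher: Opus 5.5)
\textbf{There is a genuine gap: the case $k\equiv 2\pmod 3$ with $p=1$ is not proved.} Your reduction to the two regimes is correct, and so is the identity $|D|=\frac{13}{3}k+\frac{p}{3}$. You also correctly locate the difficulty in excluding $p=1$ when $k\equiv2\pmod 3$. But that step is never proved, and both routes you sketch for it are flawed.

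\begin{itemize}
\item Your supporting claim that in a $44$ pair ``exactly one block contains $d,h$'' is backwards. In your $\{4,5\}$ regime no block has weight $6$. So Lemma~\ref{weight4}(iii) says that the two blocks of a $44$ pair contain both or neither of their $d,h$ vertices.
\item Your fallback is false. The pattern $4454$ does occur: the cyclic concatenation of copies of $X$ has weight sequence $454454\cdots$ and is an optimal set.
\end{itemize}

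\textbf{The missing idea.} No parity tracking around the cycle is needed; the obstruction is purely local and must include the following $5$. In your second regime the pattern $545$ cannot occur:
\begin{itemize}
\item Every block of weight $5$ has both neighbours of weight $4$, so Lemma~\ref{w=5} gives $h_i\notin D$ for every such block.
\item If $w_{i-1}w_iw_{i+1}=545$, then $h_{i-1},h_{i+1}\notin D$.
\item If $h_i\in D$, it would have two neighbours outside $D$.
\item If $h_i\notin D$, Lemma~\ref{weight4}(i) gives $d_i\notin D$, so $h_i$ is undominated.
\end{itemize}

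\textbf{How this closes the proof.} Every word begins with $5$, so a word $54$ is always followed by $5$ and produces $545$. Hence $p=0$, and the second regime is exactly $(544)^{k/3}$. This forces $k\equiv0\pmod3$ and $|D|=\frac{13}{3}k$. For $k\not\equiv0\pmod3$ only your first regime survives, and there $|D|=\frac92 k$, which suffices. This is essentially the paper's argument. The paper shows that a $44$ pair must be followed by $5$ and then by $44$ again. It excludes a $5$ in the next position by exactly this contradiction at the middle $h$-vertex.
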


\begin{proof}
By Lemma~\ref{8in12}, $W_i(2)\in\{8,9\}$ for every $i\in[k]$.
Suppose first that $W_i(2)=9$ for every $i$. Then
$2|D|=\sum_{i=1}^k W_i(2)=9k$,
so $k$ is even and $|D|=\frac92k$. Since $k\ge6$, this is at least the
claimed lower bound in each congruence class modulo $3$.

Suppose now that $W_j(2)=8$ for some $j$. By
Lemma~\ref{8in12}(i), we have $w_jw_{j+1}=44$. Since every
$2$-block has weight at most $9$, we have
$w_{j-1},w_{j+2}\le5$. Neither can be $3$ by Lemma~\ref{weight3}, while either being $4$
would create a pattern $444$, which by Lemma~\ref{weight4}(iv) would
force an adjacent block of weight at least $6$, contradicting
$W_\ell(2)\le9$. Hence $w_{j-1}=w_{j+2}=5$.

Now $w_{j-2},w_{j+3}\le4$. By Lemma~\ref{weight3}, neither can be
$3$. Therefore
\[
w_{j-2}w_{j-1}w_jw_{j+1}w_{j+2}w_{j+3}=454454.
\]

By Lemma~\ref{weight4}(iii), either
$d_j,h_j,d_{j+1},h_{j+1}\in D$,
or none of these four vertices belongs to $D$. In the latter case,
$h_{j+2}\in D$, contradicting Lemma~\ref{w=5}. Hence all four
vertices belong to $D$, and Lemma~\ref{w=5} gives
$h_{j+2}\notin D$.

We claim that $w_{j+4}=4$. Since $w_{j+3}=4$ and
$W_{j+3}(2)\le9$, Lemma~\ref{weight3} gives
$w_{j+4}\in\{4,5\}$. Suppose that $w_{j+4}=5$. Then
$w_{j+5}=4$ by the upper bound on $2$-blocks and
Lemma~\ref{weight3}. Lemma~\ref{w=5} gives
$h_{j+4}\notin D$. Since also $h_{j+2}\notin D$, the $1$-limited
constraint at $h_{j+3}$ implies $h_{j+3}\notin D$. Hence
$d_{j+3}\notin D$ by Lemma~\ref{weight4}(i), leaving
$h_{j+3}$ undominated, a contradiction. Therefore $w_{j+4}=4$, and
so $w_{j+3}w_{j+4}=44$.

Repeating the same argument cyclically yields the periodic pattern
$445445\cdots445$.
Therefore $k$ must be a multiple of $3$. Writing $k=3m$, we obtain
$|D|=13m=\frac{13}{3}k$.
Thus necessarily $k\equiv0\pmod3$, and the claimed lower bound follows.
\end{proof}

We now consider the complementary case in which some $2$-block has
weight at least $10$, while all $5$-blocks remain light.

\begin{lemma}\label{total=5}
Let $k\ge6$ and suppose that $W_t(2)\ge10$ for some $t\in[k]$. If
$W_i(5)\le22$ for every $i\in[k]$, then
\[
\gammaenaL(G_k)\geq
\begin{cases}
\frac{13}{3}k, & k\equiv0\pmod3,\\
\frac{13}{3}k+\frac23, & k\equiv1\pmod3,\\
\frac{13}{3}k+\frac43, & k\equiv2\pmod3.
\end{cases}
\]
\end{lemma}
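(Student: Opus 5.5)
The plan is to exploit the tension between the two hypotheses: some $2$-block has weight at least $10$, while every $5$-block has weight at most $22$. Lemma~\ref{5blocks} already gives $W_i(5)\ge21$ for all $i$, so every $5$-block has weight $21$ or $22$. The first step is to write a $5$-block containing a heavy pair as $W_{t-3}(5)=W_{t-3}(3)+W_t(2)$ and $W_t(5)=W_t(2)+W_{t+2}(3)$. Lemma~\ref{8in12}(ii) gives $W(3)\ge12$, so together with $W_t(2)\ge10$ this forces:
\begin{itemize}
\item $W_t(2)=10$ exactly, and every $2$-block has weight at most $10$;
\item the $3$-blocks immediately to the left and to the right of $B_tB_{t+1}$ have weight exactly $12$, so by Lemma~\ref{8in12}(ii) their patterns lie in $\{363,444\}$.
\end{itemize}

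Next I would propagate. If a flanking $3$-block is $363$, then Lemma~\ref{weight3} forces the adjacent block of the heavy pair to be at least $6$, so the pair is $46$ or $64$. If it is $444$, Lemma~\ref{weight4}(iv) forces an outer neighbour of weight at least $6$, and Lemma~\ref{tight3windows} then adds weight to the neighbouring $3$-blocks. Each time, the bound $W(5)\le22$ fixes the next blocks exactly. For example, after $46\,363$ the next $2$-block must again have weight $10$, since the containing $5$-block is already at $22$ minus what remains. I expect the sequence to be forced into a small number of rigid local shapes, such as repetitions of $463$ and $445$ interleaved with a bounded number of transitional patterns. Each $3$-window of these shapes averages at least $13$, and the transitions cost extra weight.

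The final step is to turn this rigid structure into the global bound. I would cut the cycle, starting at the heavy pair, into consecutive segments of lengths $3$, $4$ and $5$, with weights at least $13$, $18$ and $23$ respectively, matching $|D_X|$, $|D_Y|$ and $|D_Z|$ from Lemma~\ref{upper}. Since $k=3q+r$ with $r\in\{0,1,2\}$, summing over the segments gives $|D|\ge 13q$, $13q+5$ or $13q+10$ in the three cases. These are exactly the claimed values. The $23$ needed for $k\equiv2\pmod3$ should come from the requirement that the forced pattern close up around the cycle: a length-$5$ remainder with weight only $22$ would violate one of the local lemmas at the wrap-around point.

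The main obstacle is the case analysis in the propagation step, namely proving that every block occurring after a heavy pair is forced under the constraint $W(5)\le22$. This needs the finer $d_i,h_i$ information from Lemmas~\ref{weight4}, \ref{w=5} and~\ref{sixblocks25}, not just the weights. The closing-up argument for $k\equiv1,2\pmod3$ is the second delicate point. I would first try to show that only the periodic patterns $463463\cdots$ and $445445\cdots$ survive, possibly with a single glued transition. For $k$ small enough that the windows used above overlap, I would instead do a direct finite check, supported by the $6$-block bounds of Lemmas~\ref{6blocks} and~\ref{vsota6}.
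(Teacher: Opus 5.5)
Your first step is correct: $W_{t-3}(5),W_t(5)\le 22$ force $W_t(2)=10$ and force both flanking $3$-blocks to have weight $12$. After that, the structure you anticipate is not the one that occurs.

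A flanking $363$ does not propagate; it is impossible outright. Lemma~\ref{weight3} gives weight at least $6$ on \emph{both} sides of the $363$, so the $5$-block made of the $363$ and these two neighbours weighs at least $24$. Your example $46\,363$ is of exactly this kind. Repetitions of $463$ cannot occur at all, since a block of weight $3$ needs weight at least $6$ on both sides. A $445$-periodic stretch contains no $2$-block of weight $10$.

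What is actually forced is much more rigid. Both flanks are $444$. Lemma~\ref{weight4}(iv) together with $W(5)\le 22$ gives $w_t\cdots w_{t+5}=644446$, up to reversal. Then $W_{t+3}(5),\dots,W_{t+6}(5)\le 22$, with Lemmas~\ref{weight3} and~\ref{weight4}(iv), give $w_{t+6}\cdots w_{t+10}=44446$. So the whole weight sequence is $64444\,64444\cdots$, hence $5\mid k$ and $|D|=\frac{22}{5}k$.

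For this sequence your final segmentation step cannot be carried out.
\begin{itemize}
\item A $5$-segment of weight $23$ is excluded by the hypothesis $W_i(5)\le 22$ itself.
\item A $3$-segment of weight at least $13$, or a $4$-segment of weight at least $18$, must contain a block of weight $6$. It contains at most one, since these blocks are $5$ apart.
\item So a tiling by such segments has at most $k/5$ pieces. Pieces of length at most $4$ need at least $k/4$ of them, a contradiction.
\end{itemize}

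There is also no wrap-around contradiction to find, because the pattern is realized. For $k=5$ it is the set in Figure~\ref{G5}. In general it is the lift of that set along the covering $G_{5m}\to G_5$, $B_i\mapsto B_{i \bmod 5}$. For $k=10$ and $k=20$ this lift has size $44$ and $88$, exactly the claimed bounds.

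The missing step is a direct numerical comparison. Let $a\in\{0,1,2\}$ be the residue of $k$ modulo $3$. Then $\frac{22}{5}k\ge\frac{13}{3}k+\frac{2a}{3}$ is equivalent to $k\ge 10a$. This holds because $5\mid k$ and $k\ge 6$ leave only $k\in\{10,25,\dots\}$ for $a=1$ and $k\in\{20,35,\dots\}$ for $a=2$. The value $k=5$, where the inequality fails, is exactly why $\gamma_1^{\mathrm L}(G_5)=22$ is exceptional.
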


\begin{proof}
By Lemma~\ref{5blocks}, $W_i(5)\in\{21,22\}$ for every $i\in[k]$.
Let $a\in\{0,1,2\}$ be the residue of $k$ modulo $3$. Suppose, to the
contrary, that
\[
|D|<\tfrac{13}{3}k+\tfrac{2a}{3}.
\]

Since
$W_t(5)=W_t(2)+W_{t+2}(3)\le22$,
Lemma~\ref{8in12} gives
$W_t(2)=10$ and $W_{t+2}(3)=12$. Hence
$w_{t+2}w_{t+3}w_{t+4}\in\{363,444\}$. The pattern $363$ would give
$w_{t+1},w_{t+5}\ge6$ and hence $W_{t+1}(5)\ge24$, a contradiction.
Thus
$w_{t+2}w_{t+3}w_{t+4}=444$.

By Lemma~\ref{weight4}(iv), one of $w_{t+1},w_{t+5}$ is at least $6$.
Reversing the cyclic order if necessary, assume that $w_{t+5}\ge6$.
Since $W_{t+1}(5)\le22$, we have $w_{t+1}\le4$. Moreover,
$w_{t+1}\neq3$ by Lemma~\ref{weight3}, so $w_{t+1}=4$. Consequently,
$w_t=6$, and again from $W_{t+1}(5)\le22$ we obtain $w_{t+5}=6$.
Thus
$w_tw_{t+1}w_{t+2}w_{t+3}w_{t+4}w_{t+5}=644446$.

Now $W_{t+3}(5)\le22$ and $W_{t+6}(2)\ge8$ give
$w_{t+6}=w_{t+7}=4$. Similarly, using Lemma~\ref{weight3} to exclude
weight $3$, the bounds $W_{t+4}(5),W_{t+5}(5)\le22$ give
$w_{t+8}=w_{t+9}=4$. Applying Lemma~\ref{weight4}(iv) to the pattern
$w_{t+7}w_{t+8}w_{t+9}=444$ gives $w_{t+10}\ge6$, while
$W_{t+6}(5)\le22$ gives $w_{t+10}\le6$. Hence $w_{t+10}=6$.

Repeating this argument cyclically yields the periodic pattern
$6444464444\cdots64444$. Therefore $k$
must be a multiple of $5$. Each period has total weight $22$, and hence
$|D|=\frac{22}{5}k$.
The assumed inequality therefore gives
\[
\tfrac{22}{5}k<\tfrac{13}{3}k+\tfrac{2a}{3},
\]
or equivalently $k<10a$.

If $a=0$, this is impossible. If $a=1$, then $k<10$, contradicting
$5\mid k$ and $k\ge6$. Finally, if $a=2$, then $k<20$ and $5\mid k$,
so $k\in\{10,15\}$, but neither is congruent to $2$ modulo $3$.
Thus the assumed strict inequality is impossible, and the claimed
lower bound follows. 
\end{proof}

It remains to consider the case in which some $5$-block has weight at
least $23$. The following estimates will handle the remaining small
values of $k$.

\begin{lemma}
\label{obstaja23}
Let $k\ge6$ and suppose that $W_t(5)\ge23$ for some $t\in[k]$.
\begin{itemize}
    \item[(i)] If $k\ge7$, then $W_i(7)\ge30$ for every $i\in[k]$,
    and $W_j(7)\ge31$ for some $j\in[k]$.
    \item[(ii)] If $k\ge8$, then $W_j(8)\ge36$ for some $j\in[k]$.
    \item[(iii)] If $k\ge9$, then $W_i(9)\ge38$ for every $i\in[k]$,
    and $W_j(9)\ge39$ for some $j\in[k]$.
    \item[(iv)] If $k\ge10$, then $W_j(10)\ge44$ for some $j\in[k]$.
    \item[(v)] If $k\ge11$, then $W_j(11)\ge49$ for some $j\in[k]$.
\end{itemize}
\end{lemma}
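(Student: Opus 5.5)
The plan is to derive each item by splitting an $n$-block into shorter blocks whose weights are already controlled. The tools are Lemmas~\ref{8in12}, \ref{5blocks} and~\ref{4blocks}, together with the lower bound $W_i(3)\ge12$ and the hypothesis that some $5$-block $B_t,\dots,B_{t+4}$ has weight at least $23$. The universal bounds come directly from the decompositions
\[
W_i(7)=W_i(5)+W_{i+5}(2)\ge 21+8=29,
\qquad
W_i(9)=W_i(5)+W_{i+5}(4)\ge 21+16=37.
\]
Each of these is one below the target, so in each case I would analyse equality.

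For $W_i(7)=29$, both decompositions $W_i(5)+W_{i+5}(2)$ and $W_i(2)+W_{i+2}(5)$ must be tight. Hence $w_iw_{i+1}=44$, $w_{i+5}w_{i+6}=44$, and $W_{i+2}(3)=W_i(7)-16=13$. I would then run through the $3$-patterns of weight $13$ sitting between two $44$'s. Weight~$3$ is excluded by Lemma~\ref{weight3}, so the middle pattern is a permutation of $454$ type, i.e. $445$, $454$ or $544$. The patterns $445$ and $544$ create $444$, whose neighbour must be $\ge6$ by Lemma~\ref{weight4}(iv), a contradiction. This leaves $4445444$, which again contains $444$ followed by a block of weight $5<6$. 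So $W_i(7)\ge30$.

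The same method gives $W_i(9)\ge38$. If $W_i(9)=37$, then every split into $5+4$ and $4+5$ is tight. Writing the $9$-block as $2+3+2+2$ and similar splits forces many $44$ patterns and hence a run of $444$ without a neighbouring $6$. The existence statements in (i) and (iii) then come from a $7$- or $9$-block containing $B_t,\dots,B_{t+4}$, for example
\[
W_t(7)=W_t(5)+W_{t+5}(2)\ge 23+8=31,
\qquad
W_t(9)\ge 23+16=39.
\]
Items (ii), (iv) and (v) follow in the same way. One computes $W_t(8)=W_t(5)+W_{t+5}(3)\ge23+12=35$, which is one short, so for (ii) I would instead choose the window differently. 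Either $W_{t-3}(8)=W_{t-3}(3)+W_t(5)$, and one of $W_{t-3}(3),W_{t+5}(3)$ is at least $13$ (otherwise both are tight patterns $363$ or $444$ adjacent to a heavy $5$-block, which I would check against Lemma~\ref{tight3windows}), or I use $W_t(5)\ge24$ directly. For (iv), $W_t(10)=W_t(5)+W_{t+5}(5)\ge23+21=44$ is immediate. For (v), I would take $W_t(11)=W_t(5)+W_{t+5}(6)\ge23+25=48$ and upgrade it using Lemma~\ref{sixblocks25}: if the $6$-block has weight $25$, its pattern is $444463$ or $364444$ and its neighbouring blocks have weight $\ge6$. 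This forces $w_{t+4}\ge6$, and one then compares with the other decomposition $W_{t-6}(6)+W_t(5)$, shifting the window by one if necessary.

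I expect the main obstacle to be exactly these ``off by one'' cases, namely the existence claims in (ii) and (v) and the equality analysis for $W_i(9)$. Each requires a careful enumeration of tight patterns, in which the hypothesis $W_t(5)\ge23$ and the restrictions around $444$ and around blocks of weight $3$ must be combined. Small $k$ near the window length, where windows wrap around the cycle, also need separate checking.
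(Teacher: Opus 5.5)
Your proposal has two genuine gaps, in parts (i) and (ii), and (v) is left unfinished.

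\textbf{Part (i).} Your reduction of $W_i(7)=29$ to the pattern $w_i\cdots w_{i+6}=4445444$ is correct. The final step, however, misreads Lemma~\ref{weight4}(iv). That lemma only guarantees that \emph{one} of the two neighbours of a $444$ has weight at least $6$. The first triple can be served by $w_{i-1}\ge6$ and the second by $w_{i+7}\ge6$. Both blocks lie outside your $7$-block, so for $k\ge8$ nothing is contradicted; only for $k=7$ do they coincide with the $4$'s at the other end. The pattern is in fact impossible, but you need an extra ingredient. For example, $w_i\cdots w_{i+5}=444544$ has weight $25$ but is neither $444463$ nor $364444$, contradicting Lemma~\ref{sixblocks25}. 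The paper builds this in from the start by splitting $W_i(7)=w_i+W_{i+1}(6)$. If $W_{i+1}(6)=25$, then $w_i\ge6$. Otherwise $W_i(7)\le29$ forces $w_i=3$, hence $w_{i+1}\ge6$ and $W_{i+2}(5)\le20$. Your equality analysis for $W_i(9)\ge38$ is fine as it stands: the forced pattern $444454444$ contains a $444$ flanked by $4$ and $5$.

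\textbf{Part (ii).} Your dichotomy is not exhaustive, because the windows $W_{t-3}(8)$ and $W_t(8)$ can both equal $35$. The residual case $W_t(5)=23$, $W_{t-3}(3)=W_{t+5}(3)=12$ yields no contradiction, since Lemma~\ref{tight3windows} only gives $W_t(3),W_{t+2}(3)\ge13$. For instance, $w_{t-3}\cdots w_{t+7}=4444\,636\,4444$ passes every local lemma. It is realized by a $1$-limited dominating set of $G_{12}$ with weight sequence $744446364444$. Taking $t=5$ there gives $W_5(5)=23$ and $W_2(8)=W_5(8)=35$. That set is not minimum, but none of the lemmas you invoke uses minimality. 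So for a given $t$ you cannot restrict attention to these two windows.

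The missing idea is the global contrapositive used in the paper. Suppose $W_i(8)\le35$ for \emph{every} $i$. Then the universal bound in (i) gives $w_i=W_i(8)-W_{i+1}(7)\le5$ for all $i$, which also excludes weight $3$. Now $W_{t+5}(3)=W_t(8)-W_t(5)\le12$ forces a $444$ whose neighbours both have weight at most $5$, contradicting Lemma~\ref{weight4}(iv). This argument depends on the bound from (i), so (i) must be repaired first.

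\textbf{Part (v).} You have the right ingredients but do not draw the conclusion. Assuming $W_{t-6}(11),W_t(11)\le48$ forces $W_{t-6}(6)=W_{t+5}(6)=25$ and $W_t(5)=23$. Lemma~\ref{sixblocks25} then gives $w_t,w_{t+4}\ge6$. Hence $W_{t+1}(3)\le11$, contradicting Lemma~\ref{8in12}(ii), and no shifting of windows is needed.
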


\begin{proof}
(i) Suppose, to the contrary, that
$W_i(7)=w_i+W_{i+1}(6)\le29$ for some $i\in[k]$. If
$W_{i+1}(6)=25$, then Lemma~\ref{sixblocks25} gives $w_i\ge6$, and
hence $W_i(7)\ge31$, a contradiction. Thus $W_{i+1}(6)\ge26$, so
$w_i=3$ and $W_{i+1}(6)=26$. Lemma~\ref{weight3} gives
$w_{i+1}\ge6$, and hence $W_{i+2}(5)\le20$, contradicting
Lemma~\ref{5blocks}. Therefore $W_i(7)\ge30$ for every $i\in[k]$.
Moreover, $W_t(7)=W_t(5)+W_{t+5}(2)\ge23+8=31$.

\medskip
(ii) Suppose, to the contrary, that $W_i(8)\le35$ for every
$i\in[k]$. By (i), $w_i=W_i(8)-W_{i+1}(7)\le5$ for every $i$. Since a block of
weight $3$ would, by Lemma~\ref{weight3}, force an adjacent block to
have weight at least $6$, we have $w_i\in\{4,5\}$ for every $i$.
Furthermore, $W_{t+5}(3)=W_t(8)-W_t(5)\le12$, so
Lemma~\ref{8in12}(ii) gives $w_{t+5}w_{t+6}w_{t+7}=444$.
Lemma~\ref{weight4}(iv) then forces an adjacent block to have weight
at least $6$, a contradiction.

\medskip
(iii) By (i) and Lemma~\ref{8in12}(i),
$W_i(9)=W_i(7)+W_{i+7}(2)\ge38$ for every $i\in[k]$. If
$W_i(9)=38$ for every $i$, then $W_{i+7}(2)=8$ for every $i$.
Lemma~\ref{8in12}(i) then gives $w_iw_{i+1}=44$ for every $i$, and
hence $W_t(5)=20$, contradicting $W_t(5)\ge23$.

\medskip
(iv) By Lemma~\ref{5blocks},
$W_t(10)=W_t(5)+W_{t+5}(5)\ge23+21=44$.

\medskip
(v) Suppose, to the contrary, that $W_i(11)\le48$ for every
$i\in[k]$. Then
$W_{t-6}(6)+W_t(5)\le48$ and $W_t(5)+W_{t+5}(6)\le48$.
Since every $6$-block has weight at least $25$ and $W_t(5)\ge23$,
equalities hold throughout. Thus
$W_{t-6}(6)=25$, $W_t(5)=23$, and $W_{t+5}(6)=25$.
Lemma~\ref{sixblocks25} gives $w_t,w_{t+4}\ge6$, and therefore
$W_{t+1}(3)=W_t(5)-w_t-w_{t+4}\le11$, contradicting
Lemma~\ref{8in12}(ii).
\end{proof}

We are now ready to combine the preceding local and global estimates
to determine the exact value of $\gammaenaL(G_k)$.

\begin{theorem}
\label{glavnithm}
For the Goldberg snark $G_k$, $k\ge3$, we have
$$\gammaenaL(G_k)=
\begin{cases}
\frac{13}{3}k, & k\equiv0\pmod{3},\\
\frac{13}{3}k+\frac{2}{3}, & k\equiv1\pmod{3},\\
\frac{13}{3}k+\frac{4}{3},
    & k\equiv2\pmod{3}\text{ and }k\neq5,\\
22, & k=5.
\end{cases}$$
\end{theorem}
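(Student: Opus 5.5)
The upper bounds are exactly Lemma~\ref{upper}, so only the lower bound remains. Fix a minimum $1$-limited dominating set $D$ and let $a\in\{0,1,2\}$ be the residue of $k$ modulo $3$. The target is $|D|\ge\frac{13}{3}k+\frac{2a}{3}$, and in addition $|D|\ge 22$ when $k=5$. Since $|D|$ is an integer, for $a=0,1,2$ this is the same as $|D|\ge\lceil 13k/3\rceil$, $\lceil 13k/3\rceil$ and $\lceil 13k/3\rceil+1$, respectively.

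\textbf{Small cases $k\le 5$.} For $k=3$ I would use Lemma~\ref{8in12}(ii): $|D|=W_1(3)\ge12$, and equality forces the cyclic pattern $363$ or $444$. The first contradicts Lemma~\ref{weight3}, since both neighbours of the weight-$3$ block would be the $3$ on the other side. The second contradicts Lemma~\ref{weight4}(iv), since a neighbour of weight $\ge6$ would be needed. Hence $|D|\ge13$. For $k=4$, Lemma~\ref{4blocks} gives $|D|\ge18$ directly. For $k=5$, Lemma~\ref{5blocks} gives $|D|=W_j(5)\ge22$, which is the claimed value. The cyclic-wrapping caveats in these lemmas, such as $w_{i-1}$ coinciding with $w_{i+k-1}$, should be checked, but the lemmas were stated for $k\ge4$ and $k\ge5$.

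\textbf{Main case $k\ge6$.} I would split into three cases.
\begin{itemize}
\item If $W_i(2)\le9$ for all $i$, apply Lemma~\ref{total2}.
\item If some $W_t(2)\ge10$ and all $W_i(5)\le22$, apply Lemma~\ref{total=5}.
\item Otherwise some $W_t(5)\ge23$. This is the real work.
\end{itemize}

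\textbf{The case $W_t(5)\ge23$.} Write $k=6n+r$ with $0\le r\le5$. Cut the cycle into one block of $r$ extra positions, chosen to contain the heavy window if convenient, together with $n$ consecutive $6$-blocks. Apply Lemma~\ref{vsota6} (or Lemma~\ref{6blocks} when $n=1$) to get $\ge26n$ from the $6$-blocks. Choose the residual window to realise the $\ge$ bounds in Lemma~\ref{obstaja23} or Lemma~\ref{5blocks}. Equivalently, split $k$ as $6(n-1)+(6+r)$ and use the bounds for $7,8,9,10,11$ consecutive blocks.

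The required totals are as follows.
\begin{itemize}
\item $k=6n+1$: need $26n+5$, and $W_j(7)\ge31$ supplies it via $26(n-1)+31$.
\item $k=6n+2$: need $26n+10$, from $W_j(8)\ge36$.
\item $k=6n+3$: need $26n+13$, from $W_j(9)\ge39$.
\item $k=6n+4$: need $26n+18$, from $W_j(10)\ge44$.
\item $k=6n+5$: need $26n+23$, from $W_j(11)\ge49$.
\item $k=6n$: need $26n$, directly from Lemma~\ref{vsota6} or Lemma~\ref{6blocks}.
\end{itemize}
The cases $k=7,\dots,11$ themselves, i.e.\ $n=1$ with $6+r=k$, follow from Lemma~\ref{obstaja23} alone. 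One wrinkle: when $n-1=1$, Lemma~\ref{vsota6} needs $n\ge2$, so for a single $6$-block I would fall back to Lemma~\ref{6blocks}, which only gives $25$. In that situation I would instead use the "every $i$" bounds in Lemma~\ref{obstaja23}(i),(iii) plus the "some $j$" statement placed on the other part, or take the $6$-block to be the one with weight $\ge26$ from Lemma~\ref{6blocks}.

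\textbf{Main obstacle.} The delicate point is positioning. For the residual window I need a specific $j$ (the heavy one), so the remaining $6(n-1)$ blocks form a prescribed run. Lemma~\ref{vsota6} holds for every $i$, so that run is fine for $n-1\ge2$. For $n-1=1$, i.e.\ $k\in\{13,\dots,17\}$, only $\ge25$ holds for every $6$-block, which loses one. There I would try the decomposition $k=7+(k-7)$ or $9+(k-9)$, using the "for every $i$" lower bounds $W_i(7)\ge30$ and $W_i(9)\ge38$ together with a "some $j$" bound on the complement, re-checking the arithmetic case by case.
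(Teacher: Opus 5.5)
\textbf{There is a genuine gap in your final case, for $13\le k\le17$.} Your treatment of $k\le 6$, the three-way case split, and the cases $7\le k\le11$, $k\equiv0\pmod 6$ and $k\ge18$ are sound and match the paper. The gap is the one you flag: $13\le k\le17$ in the case $W_t(5)\ge23$. The repairs you suggest do not close it.
\begin{itemize}
\item $k=13$ (target $57$): the splits $7+6$ and $9+4$ give $30+26$, $31+25$ and $38+18$, all equal to $56$.
\item $k=14$ (target $62$): you get $30+31=61$, $38+23=61$ and $39+21=60$.
\item $k=16$ (target $70$): you get $30+39=38+31=69$.
\item $k=17$ (target $75$): you get $30+44=38+36=74$, and even $W_j(11)\ge49$ with $W_i(6)\ge25$ gives only $74$.
\end{itemize}
Only $k=15$ is rescued, by $W_i(7)\ge30$ together with $W_j(8)\ge36$. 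Taking the $6$-block of weight $\ge26$ from Lemma~\ref{6blocks} does not help either. It fixes the position of the complementary window, on which you then have only the weak `every $i$' bounds.

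\textbf{The missing idea.} The paper keeps at least two $6$-blocks on the `every $i$' side and spends the `some $j$' information on a \emph{short} window.
\begin{itemize}
\item Write $k=6n+a$ with $a\in\{1,\dots,6\}$, so that $n\ge2$ for every $k\ge13$.
\item Lemma~\ref{vsota6} then gives $W_{s+a}(6n)\ge26n$ for \emph{every} $s$.
\item It therefore suffices to find $s$ with $W_s(a)\ge c_a$, where $(c_1,\dots,c_6)=(5,10,13,18,23,26)$.
\end{itemize}
In your third case both hypotheses are still available. Since the first case failed, you have $W_j(2)\ge10$, which gives $c_1$ and $c_2$, and together with Lemmas~\ref{wi3} and~\ref{8in12}(i) also $c_3$ and $c_4$. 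The hypothesis $W_t(5)\ge23$ gives $c_5$, and with Lemma~\ref{wi3} also $c_6$. Then $c_a+26n$ is exactly the claimed bound.

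\textbf{Where your proposal went wrong.} Your first formulation ($r$ extra positions plus $n$ six-blocks, with $n\ge2$ once $k\ge12$) was essentially this argument. However, you pointed to Lemmas~\ref{obstaja23} and~\ref{5blocks} for the residual window. Lemma~\ref{5blocks} gives only $22$, and Lemma~\ref{obstaja23} concerns windows of length at least $7$. You then replaced it by the split $6(n-1)+(6+r)$, which is not equivalent. It moves one $6$-block from the side controlled for every index to the side controlled only for some index, and that is precisely what loses the needed unit when $n-1=1$.
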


\begin{proof}
Let $D$ be a minimum $1$-limited dominating set of $G_k$.

Suppose first that $k=3$. If $w_i=3$ for some $i$, then
Lemma~\ref{weight3} gives
$|D|=w_{i-1}+w_i+w_{i+1}\ge6+3+6>13$.
Otherwise $w_i\ge4$ for every $i$. If $|D|=12$, then the weight
pattern is $444$, contradicting Lemma~\ref{weight4}(iv). Hence
$|D|\ge13$.
For $k=4$, Lemma~\ref{4blocks} gives
$|D|=W_j(4)\ge18$, while for $k=5$, Lemma~\ref{5blocks} gives
$|D|=W_j(5)\ge22$. 

Assume now that $k\ge6$. If $W_i(2)\le9$ for every $i\in[k]$, the
required lower bound follows from Lemma~\ref{total2}. Hence we may
assume that $W_j(2)\ge10$ for some $j\in[k]$. If $W_i(5)\le22$ for
every $i\in[k]$, the result follows from Lemma~\ref{total=5}. Thus we
may further assume that $W_t(5)\ge23$ for some $t\in[k]$.

For $k=6$, Lemma~\ref{6blocks} gives an index $\ell$ such that
$W_{\ell}(6)\ge26$. Since a $6$-block contains all blocks of $G_6$,
$|D|=W_{\ell}(6)\ge26$. 
For $7\le k\le11$, the required lower bounds follow from
the corresponding parts of Lemma~\ref{obstaja23}, while for $k=12$,
Lemma~\ref{vsota6} gives $|D|=W_i(12)\ge52$. 


It remains to consider $k\ge13$. Write
$k=6n+a$, where $a\in[6]$. Then $n\ge2$.
To apply Lemma~\ref{vsota6} to the remaining $6n$ blocks, it suffices
to find $a$ consecutive blocks with sufficiently large total weight.
We claim that, for each $a\in[6]$, there exists $s\in[k]$ such that
$W_s(a)\ge c_a$, where
\[
\begin{array}{c|cccccc}
a&1&2&3&4&5&6\\ \hline
c_a&5&10&13&18&23&26.
\end{array}
\]
Indeed, $W_j(2)\ge10$ implies that at least one of the blocks $B_j$ and $B_{j+1}$ has weight at
least $5$, giving the case $a=1$, while the case $a=2$ is immediate.
By Lemmas~\ref{wi3} and~\ref{8in12},
$W_j(3)=W_j(2)+w_{j+2}\ge10+3=13$
and
$W_j(4)=W_j(2)+W_{j+2}(2)\ge10+8=18$.
The case $a=5$ follows from $W_t(5)\ge23$, and finally
$W_t(6)=W_t(5)+w_{t+5}\ge23+3=26$.

The chosen $a$ consecutive blocks and the remaining $6n$ consecutive
blocks partition the cyclic sequence of all blocks. Hence, by
Lemma~\ref{vsota6},
$|D|=W_s(a)+W_{s+a}(6n)\ge c_a+26n$.
Since
\[
c_a=
\begin{cases}
\frac{13a}{3}, & a\equiv0\pmod3,\\
\frac{13a+2}{3}, & a\equiv1\pmod3,\\
\frac{13a+4}{3}, & a\equiv2\pmod3,
\end{cases}
\]
and $k=6n+a$, this is precisely the claimed lower bound for
$\gammaenaL(G_k)$. The corresponding upper bound follows from
Lemma~\ref{upper}, and hence equality holds in every case.
\end{proof}

\section{Conclusion}

In this paper, we proved that \textsc{1-Limited Dominating Set} is
$\mathsf{NP}$-complete even on $2$-connected planar cubic graphs.
Since $1$-limited domination and $(1,2)$-domination coincide on cubic
graphs, the same holds for \textsc{$(1,2)$-Dominating Set}. Together with the known results for fixed $k\ge2$ from \cite{San-samost}, this completes the complexity classification of \textsc{$k$-Limited Dominating Set} for every fixed positive integer $k$.

The hardness result further motivates the study of exact values on
well-structured graph families, which may also serve as test cases for
broader conjectures on cubic graphs. In this direction, we determined
the exact $1$-limited domination number, equivalently the exact
$(1,2)$-domination number, of the Goldberg family $G_k$ for every
$k\ge3$.

Our exact result provides further evidence for several such open
problems. In \cite{TepehErves}, it was conjectured that every connected
cubic graph $G$ of order $n$, except for a specific exceptional graph
of that order (called the trilobite), satisfies
$\gamma_{1,2}(G)\le \cind(G)$. For the Goldberg family, the vertices
$\{a_i,b_i,f_i,g_i,h_i:i\in[k]\}$ induce
$C_{2k}\cup C_{2k}\cup C_k$, and hence
$\cind(G_k)\ge5k=\frac58|V(G_k)|$. Together with our exact value of
$\gamma_{1,2}(G_k)$, this yields
$\gamma_{1,2}(G_k)\le\cind(G_k)$. In particular, the Goldberg family
satisfies the conjecture of Henning et al.~\cite{HenningJoos} that
$\cind(G)\ge |V(G)|/2$ for every cubic graph $G$.

Finally, Fakhran et al.~\cite{fakhran} asked whether
$\gamma_{1,2}(G)\le\frac23|V(G)|$ holds for every triangle-free cubic
graph $G$. Since $G_k$ is triangle-free for $k\ge4$, our exact formula
gives $\gamma_{1,2}(G_k)<\frac23|V(G_k)|$ for every such $k$.
Thus the Goldberg family also provides further evidence for this
proposed upper bound.

\vskip 1pc \noindent{\bf Acknowledgments.} 
This work has been supported by the European Commission's Horizon Europe Research and Innovation programme through the Marie Sk\l{}odowska-Curie Actions Staff Exchanges (MSCA-SE) under Grant Agreement no.~101182819 (COVER: (C)ombinatorial (O)ptimization for (V)ersatile Applications to (E)merging u(R)ban Problems).
A.~T.~was partially supported by the Slovenian Research and Innovation Agency (ARIS), program no.~P1--0297 and project no.~J1--70016. 

\vskip 1pc \noindent{\bf Statements and Declarations.}
The author has no conflict of interest to declare. 

\vskip 1pc \noindent{\bf Declaration of generative AI and AI-assisted technologies
in the manuscript preparation process}.
During the preparation of this work, the authors used ChatGPT (OpenAI)
to assist with language editing. After using this tool, the authors
reviewed and edited the content as needed and take full responsibility
for the content of this paper.

\end{document}